\newtheorem{theorem}{Theorem}[section]
\newtheorem{proposition}[theorem]{Proposition}
\newtheorem{lemma}[theorem]{Lemma}
\newtheorem{corollary}[theorem]{Corollary}
\theoremstyle{definition}
\newtheorem{definition}[theorem]{Definition}
\newtheorem{remark}[theorem]{Remark}
\author{Gregory R. Chambers}
\address{Department of Mathematics, University of Chicago, Chicago, IL, USA}
\email{chambers@math.uchicago.edu}
\author{Yevgeny Liokumovich}
\address{Department of Mathematics, Imperial College London, UK}
\email{y.liokumovich@imperial.ac.uk}
\begin{document}

\title{Optimal sweepouts of a Riemannian 2-sphere}

\newcommand{\R}{\mathbb R}

\maketitle

\begin{abstract}

Given a sweepout of a Riemannian $2$-sphere which is composed of curves of length less than
$L$, we construct a second sweepout composed of curves of length less than
$L$ which are either constant curves or simple curves.

This result, and the methods used to prove it, have several consequences; we answer a
question of M. Freedman
concerning the existence of min-max embedded geodesics, we partially answer a question due to N. Hingston
and H.-B. Rademacher, and we also extend the results of \cite{CL} concerning converting homotopies to isotopies in an effective way.
\end{abstract}

%%%%%%%%%%%%%%%%%%%%%%%%%%%%%%%%%%%%%%%%%%%%%%%%%%%%%
\section{Introduction}
%%%%%%%%%%%%%%%%%%%%%%%%%%%%%%%%%%%%%%%%%%%%%%%%%%%%%

Let $(S^2, g)$ be a Riemannian 2-sphere and let $\Lambda$
denote the space of all smooth closed curves on $(S^2, g)$.
Let $E(\gamma)= \int_0 ^1 |\gamma'(s)|^2 ds $ denote the energy of a closed curve
$\gamma \in \Lambda$.
A \textit{sweepout} $\{\gamma_t(s) \}_{t \in S^1}$ is a family of closed
curves corresponding to the generator
of $H_1(\Lambda, \mathbb{Z})$.

Consider the following min-max quantity
$$W(S^2,g) = \inf_{\{\gamma_t \}} \sup_t \sqrt{E (\gamma_t) }$$
where the infimum is taken over all sweepouts $\{ \gamma_t \}$ of 
$(S^2,g)$.

By a classical argument going back to Birkhoff \cite{Bir},
$(S^2, g)$ contains a closed
geodesic of length $W(S^2,g)$.

%In 1917, Birkhoff \cite{Bir} considered the min-max quantity
%$$W_{\Lambda}(S^2,g) = \inf_{\{\gamma_t \}} \sup_t \{length (\gamma_t) \}$$
%where the infimum is taken over all sweepouts $\gamma_t$ of 
%$(S^2,g)$. Birkhoff showed that $(S^2, g)$ contains a closed
%geodesic of length $W_{\Lambda}(S^2,g)$.

%In 1929, Lyusternik and Shnirelman \cite{LS} considered the space 
%$\Pi$ of embedded closed curves and points in $(S^2,g)$.
%Denote the constant curves (points) in $\Pi$ by $\Pi_0$.
%They used min-max arguments to prove the existence of 3
%distinct simple closed geodesics on $(S^2,g)$.
%The length of the first simple closed geodesic is equal to
%$$W_{\Pi}(S^2,g) = \inf_{\{\gamma_t \}} \sup_t \{length (\gamma_t) \}$$
%where the infimum is taken over all families $\gamma_t$
%in $\Pi$ representing the generator of $H_1((\Pi,\Pi_0); \mathbb{Z}_2)$
%($\Pi_0$ denotes the set of points in $\Pi$).

In the 1980s, M. Freedman considered the question of
whether one can construct an embedded geodesic
via min-max methods on the space $\Lambda$.
Apart from being a fundamental question about closed geodesics,
this question also has the following motivation. 
Consider a similar problem for families of
2-dimensional spheres in a homotopy sphere $M$.
If we could replace a sweepout of $M$ by immersed 2-spheres
with a sweepout by embedded 2-spheres, then
by the ambient isotopy theorem it would follow 
that $M$ is diffeomorphic to $S^3$, implying
the Poincar\'{e} conjecture (see \cite{Freed}).

In this paper we give an affirmative answer to
Freedman's question.

\begin{theorem} \label{Freedman}
Every Riemannian 2-sphere $(S^2,g)$ contains an embedded closed geodesic of length
$W(S^2,g)$.
\end{theorem}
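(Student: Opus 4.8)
The plan is to combine the main construction of this paper with a min--max argument carried out entirely in the class of simple curves, and then to observe that a limit of simple curves which is a closed geodesic is forced to be embedded.

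First I would pass from energy to length. Reparametrizing a closed curve with constant speed does not increase its energy and makes $E(\gamma)=\ell(\gamma)^2$, where $\ell$ denotes length; hence $W:=W(S^2,g)=\inf_{\{\gamma_t\}}\sup_t\ell(\gamma_t)$, the infimum over all sweepouts. Fix a minimizing sequence of sweepouts $\{\gamma^i_t\}$ with $\sup_t\ell(\gamma^i_t)\to W$. By the main result of the paper, each $\{\gamma^i_t\}$ may be replaced by a sweepout of curves of the same length bound which are simple or constant; so we may assume every $\gamma^i_t$ is simple or constant, and $W$ is also the min--max width computed over sweepouts by simple and constant curves.

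Next I would pull these sweepouts tight without leaving the class of simple curves. The obstacle is that Birkhoff's curve-shortening process can turn a simple closed curve into a non-simple one, so it cannot be applied naively; this is the technical heart of the argument, and is exactly where the disjointness-preserving deformations of \cite{CL} and the methods of this paper are needed. Concretely, I would alternate small curve-shortening steps with re-embedding steps --- applying the construction of the main theorem to the slightly perturbed, hence nearly simple, curves --- arranging that the re-embedding steps contribute a summable total increase in length. Since a curve that is not $C^1$-close to a closed geodesic has its length strictly decreased under a curve-shortening step by an amount bounded below in terms of that distance, this process cannot stall away from geodesics. The outcome is, for each $i$, a sweepout $\{\sigma^i_t\}$ by simple and constant curves with $\sup_t\ell(\sigma^i_t)\le W+\varepsilon_i$ and $\varepsilon_i\to0$, which is pulled tight in the sense that any curve of $\{\sigma^i_t\}$ whose length is within $\delta$ of the maximal length is $C^1$-close to a closed geodesic, the closeness improving as $\delta\to0$ and $i\to\infty$.

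Finally I would extract the geodesic and check embeddedness. Choosing $t_i$ with $\ell(\sigma^i_{t_i})\to W$, the curves $\sigma^i_{t_i}$ converge, along a subsequence and in $C^1$, to a closed geodesic $\gamma_\infty$ with $\ell(\gamma_\infty)=W$; thus $\gamma_\infty$ is a $C^1$-limit of simple closed curves. Such a $\gamma_\infty$ is embedded: a transverse self-intersection of $\gamma_\infty$ would force the approximating curves to cross themselves near the crossing, contradicting embeddedness; and a point of self-tangency would, by uniqueness of geodesics with prescribed initial conditions, force $\gamma_\infty$ to be a nontrivial multiple cover of a shorter simple closed geodesic $c$, in which case the approximating simple curves would eventually lie in a thin annular neighborhood of $c$ while winding around its core at least twice, which is impossible for an embedded curve. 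Hence $\gamma_\infty$ is an embedded closed geodesic of length $W(S^2,g)$, as required. The main difficulty, as indicated, is the tightening step: performing curve shortening while remaining within the class of sweepouts by simple curves and keeping the length bound under control.
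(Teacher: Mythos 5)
Your overall strategy is the paper's: reduce to sweepouts by simple and constant curves via Theorem \ref{thm:simple_sweepout}, run the min--max argument, and observe that a closed geodesic arising as a limit of simple closed curves must be embedded (your treatment of the two cases --- a transverse self-intersection versus a self-tangency forcing a multiple cover --- is correct). The gap is in your tightening step. You propose to pull tight \emph{within} the class of simple sweepouts by alternating curve-shortening steps with ``re-embedding'' steps obtained from the main construction, ``arranging that the re-embedding steps contribute a summable total increase in length.'' Nothing in Theorem \ref{thm:simple_sweepout} supports this: it is a purely qualitative statement producing \emph{some} simple sweepout under the same length bound, with no control on how far the new curves are from the ones you fed in, no quantitative bound relating the length added back to the length just removed by shortening, and no reason the alternation converges or that the intermediate families remain continuous sweepouts. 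As written, the ``technical heart'' of your argument is an assertion, not a proof.

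Fortunately the step is unnecessary, and the paper does not attempt it. In the standard Birkhoff argument one never needs the tightened families to stay simple: given a simple sweepout $\{\sigma^i_t\}$ with $\sup_t\ell(\sigma^i_t)<W+\varepsilon_i$, apply the curve-shortening map $\Phi$ to the \emph{whole} family. If every curve $\sigma^i_t$ with $\ell(\sigma^i_t)\ge W-\delta'$ were at distance at least $\delta$ from the set of closed geodesics, then finitely many applications of $\Phi$ (together with contracting the short curves) would produce a sweepout with maximal length strictly below $W$, contradicting the definition of $W(S^2,g)$. Hence some curve $\sigma^i_{t_i}$ \emph{of the original simple sweepout} is almost critical, and it is these simple curves that converge to the min--max geodesic; your final embeddedness argument then applies verbatim. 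Replace your alternation scheme by this contradiction argument and the proof closes; as it stands, the middle step does not.
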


A direct way of proving this result would be to start with an
arbitrary sweepout, cut each curve in the family at the points
of self-intersection, and then reglue them so that after a small 
perturbation we obtain a collection of embedded curves.
Cutting and regluing at finitely many points increases the energy
by an arbitrarily small amount. If we could always obtain a sweepout
by embedded curves this way then the min-max argument yields
a sequence of embedded closed curves converging to a (necessarily) embedded
closed geodesic.

This cutting and regluing procedure can be done in many different ways.
One may try to find a way so that the resulting collection of
curves forms a continuous isotopy.
In \cite{Freed}, Freedman showed that such a direct approach fails.
He constructs a family of curve with the property that, no matter how we choose to reglue the
curves, the new family will contain a discontinuity.
In this article, we circumvent this problem by
first performing certain surgeries and, more importantly,
by making use of the idea from \cite{CL} of moving
`back and forth' in time when constructing our homotopy out of 
regluings of curves.  In general, the regluing procedure described above
can produce many different simple curves from a given curve; this latter idea
avoids discontinuities in the resulting isotopy by allowing
it to pass through different regluings of each curve.

Theorem \ref{Freedman} follows from a stronger result
about simple sweepouts of $(S^2,g)$.
We say that a family of curves $\gamma_t$ for $t \in [-1,1]$ is a \textit{simple sweepout}
of $(S^2,g)$ if every curve is either a constant curve, or is a simple curve.
We prove the following theorem:

\begin{theorem} \label{thm:simple_sweepout}
If there exists a sweepout $\{\gamma_t\}$ of $(S^2,g)$
consisting of curves of length less than $L$ 
and energy less than $E$, then there exists a simple sweepout of
$(S^2,g)$ consisting of curves of length less than $L$
and energy less than $E$.
\end{theorem}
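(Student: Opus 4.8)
The plan is to keep the given sweepout $\{\gamma_t\}$, replace each curve by a short family of simple and constant curves obtained by resolving its self-intersections, and then splice these families together into a single path. After a $C^\infty$-small perturbation we may assume the family is generic: for all but finitely many $t$ the curve $\gamma_t$ is an immersion with only transverse double points, and at finitely many ``critical'' parameters $t_1<\dots<t_k$ exactly one codimension-one event occurs — the birth or death of a small loop, two strands of the curve crossing, or a triple point. Since $[-1,1]$ is compact, the strict hypotheses give a fixed slack $\delta>0$ with $\mathrm{length}(\gamma_t)<L-\delta$ and $E(\gamma_t)<E-\delta$ for all $t$; as $\mathrm{length}(\gamma_t)^2\le E(\gamma_t)$ by Cauchy--Schwarz, it will be enough to keep every new curve shorter than $\min(L,\sqrt{E})-\delta/2$ and parametrise it at constant speed, which then makes its energy less than $E$ automatically. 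The slack $\delta$ will pay for the surgeries below.

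For a non-critical $t$, the curve $\gamma_t$ is nullhomologous (since $\pi_1(S^2)=0$), so on $S^2\setminus\gamma_t$ there is an integer degree function $u_t$, well defined up to an additive constant, jumping by $\pm1$ across each edge of the arrangement. The basic local fact is that at each transverse double point the four incident faces carry degrees $d,d+1,d,d-1$ in cyclic order; hence for every non-integer $\lambda$ the region $\Omega_{t,\lambda}=\{u_t\ge\lambda\}$ meets a neighbourhood of each double point in one quadrant or in three consecutive ones, so after rounding that corner its boundary is \emph{embedded} there. Thus $\partial\Omega_{t,\lambda}$ is a disjoint union of simple closed curves, and since each edge of the arrangement lies on exactly one such boundary (the one whose $\lambda$ falls between the degrees of the two sides), the total length of all the $\partial\Omega_{t,\lambda}$ is at most $\mathrm{length}(\gamma_t)$, with rounding changing it by arbitrarily little. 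So each $\gamma_t$ canonically produces a nested family of short simple curves.

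Now let $\lambda$ sweep from just above $\max u_t$ down to just below $\min u_t$: the curves $\partial\Omega_{t,\lambda}$ then run from the constant curve to the constant curve, sweeping across $S^2$. Two things obstruct continuity. First, $\partial\Omega_{t,\lambda}$ is usually disconnected; I would join its components by thin doubled arcs, varying continuously with $\lambda$ and re-routed whenever components of $\Omega_{t,\lambda}$ merge or split, at the cost of arbitrarily little length. Second, and more seriously, this family of simple curves jumps — whenever $\lambda$ crosses a value of $u_t$ a whole layer of faces is absorbed at once, and, worst of all, whenever $t$ passes a critical time $t_i$ the combinatorics of the arrangement changes discontinuously; this last jump is exactly the obstruction Freedman identified, namely that there is no continuous-in-$t$ choice of resolution. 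I would handle every such jump as in \cite{CL}: at the jump we \emph{stop advancing the sweepout parameter, run a finite sub-homotopy through simple and constant curves that bridges the two configurations, and then resume} — a bridge that is allowed to move back and forth in time and to pass through many different regluings of the nearby curves is precisely what absorbs the discontinuity. For each of the finitely many local models (a layer being absorbed; each of the three critical events) one writes down an explicit bridge, supported near where the event happens and changing length and energy by less than $\delta$. Splicing the per-slice families with these bridges yields a continuous family, each member simple or constant, of length $<L$ and energy $<E$; a final bookkeeping check, tracking the evolution of $u_t$ as $t$ traverses $[-1,1]$, confirms that the family is still a sweepout of $(S^2,g)$.

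I expect the heart of the matter — and the main obstacle — to be the construction and verification of these bridges: keeping every intermediate curve genuinely \emph{simple}, never letting the length (hence the energy) exceed the budget, and making the back-and-forth sub-homotopies at consecutive events compatible with one another and with the connect-by-thin-tubes step. This is exactly where Freedman's obstruction bites and where the idea from \cite{CL} of moving back and forth in time is essential; the remaining steps are careful but routine.
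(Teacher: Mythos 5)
Your level-set resolution is a genuinely different (and appealing) way to extract simple curves from each $\gamma_t$, but the argument breaks at the step where you fill in the jumps of the $\lambda$-sweep. When $\lambda$ crosses an integer $m$, the region $\Omega_{t,\lambda}$ absorbs the entire union of faces on which $u_t=m$ at once; this is a global event, not one of finitely many local models, and bridging it requires a continuous family of short \emph{simple} curves interpolating between $\partial\{u_t\ge m+1\}$ and $\partial\{u_t\ge m\}$. Already for $\gamma_t$ a single simple closed curve --- say an equator of a round sphere --- your $\lambda$-sweep is constant except for two such jumps (empty set, one hemisphere, whole sphere), and bridging the first jump means contracting the equator to a point through simple curves of length less than $L$, which is impossible whenever $L<W(S^2,g)$ and is essentially the statement you are trying to prove. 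So the claim that each jump costs less than $\delta$ of length is false, and the per-$t$ sweeps cannot be built from $\gamma_t$ alone. A similar objection applies to joining components of $\partial\Omega_{t,\lambda}$ by ``thin doubled arcs at the cost of arbitrarily little length'': the components may be far apart, and the connecting arcs must traverse that distance.

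Second, the ``final bookkeeping check'' that the output is still a sweepout is not bookkeeping; it is the crux, and it is where Freedman's obstruction actually lives (the paper notes explicitly that the naive construction produces a contractible family when the degree is even). The paper's proof is organized entirely around this point: it first performs a surgery (Section \ref{sec:surgery}) producing a length/energy-controlled family that begins and ends at a constant curve, and the existence of that surgery rests on the degree formula of Proposition \ref{degree formula} (the signed count of Type 1 Reidemeister moves equals twice the degree, via Whitney's turning-number theorem), which yields a ``good pair'' of same-sign Type 1 moves precisely because the degree is odd. It then removes self-intersections not by level sets but by \emph{redrawings} --- traversals of each curve's self-intersection graph using every arc exactly once, so the length bound is automatic --- continuity being supplied by the graph structure on the space of all redrawings from \cite{CL}; and Lemma \ref{lem:deg_1} closes the loop with a parity argument on signed preimages showing the simple family has degree $\pm 1$. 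Your outline contains no substitute for any of these three ingredients, and the \cite{CL} ``back-and-forth'' device you invoke is built for redrawings, not for level sets, so it does not transfer as stated.
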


Our construction of a simple sweepout consists of two steps. 
The first step is to modify our original sweepout
$\gamma$ so that it begins and ends at a constant curve.
This is accomplished in Section 3 by means of a certain surgery along 
a self-intersection point. In particular, we define a procedure
that involves cutting some of the curves in $\gamma$
at their self-intersection points and assembling the resulting
subcurves into a new noncontractible family of curves that starts and
ends at a constant curve. This is illustrated in 
Figure \ref*{fig:surgery}.

\begin{figure}
   \centering   
    \includegraphics[scale=0.3]{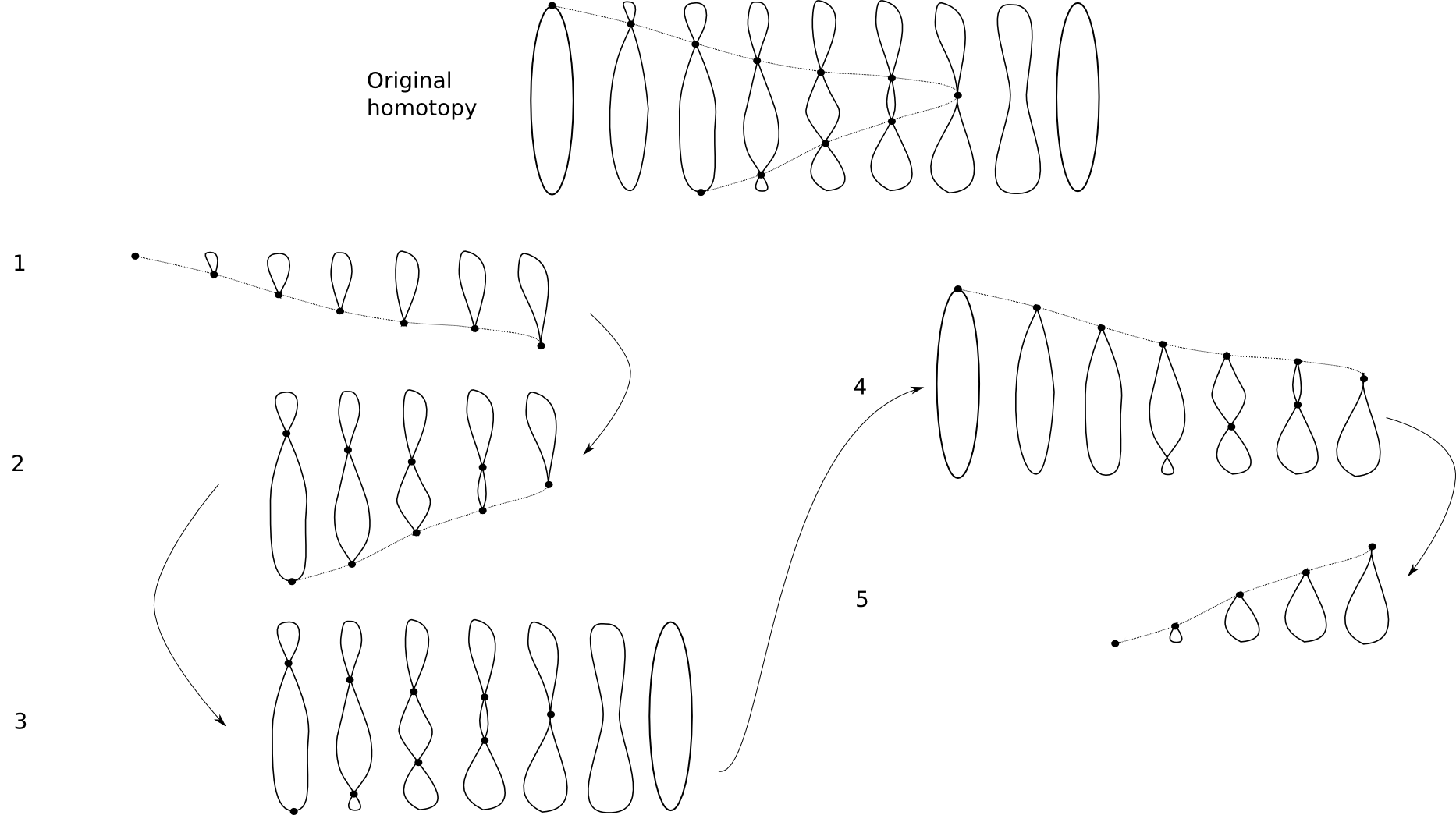}
    \caption{Producing a sweepout with constant curves as endpoints} \label{fig:surgery}

\end{figure}

For a general family of curves such a surgery may not be
possible (see Figure \ref*{fig:replicating} and the discussion at the
end of 3.1). We show, however, that it is possible whenever
the family $\gamma$ is noncontractible. To prove this result,
we derive a certain formula (Proposition \ref*{degree formula}) that relates the degree of
the map $\gamma$ to 
the number of times a small loop (oriented appropriately)
is created or destroyed in this family. 
This formula is a consequence of Whitney's theorem that
the turning number of a curve on the plane does not change under
regular homotopies.

The second step is to remove the self-intersections of each curve in the family.
To accomplish this, we apply Theorem $1.1\prime$ from  
\cite{CL}.  This theorem takes a homotopy between two simple closed curves $\gamma_0$ and $\gamma_1$ through curves
of length (energy) less than $L$ (resp. $E$), and either constructs an isotopy between $\gamma_0$ and $\gamma_1$ or an isotopy between
$\gamma_0$ and $-\gamma_1$ through curves of length (energy) less than $L$ (resp. $E$).  Here, $-\gamma_1$ is $\gamma_1$ with the
opposite orientation.
We give a brief description of this construction as we will need to
use some of its properties in this article.

Consider a closed curve $C$ as a graph with vertices corresponding to self-intersection points
and edges corresponding to arcs of the curve. A redrawing of $C$ is a connected
closed curve obtained by traversing the edges of this graph exactly once
in such a way that the result is simple after a small perturbation.
In \cite{CL} the authors showed that, given a homotopy $\gamma$,
the space of all redrawings of all curves in $\gamma$ is homeomorphic to 
a graph and that this graph
contains a path that connects a redrawing of the initial curve
to a redrawing of the final curve.
We apply the same argument to our family of curves 
that starts and ends at a point. As a result, we obtain a
family that starts and ends at a point and consists of simple
closed curves.
A priori this new family may be contractible in $\Lambda$;
this indeed happens if the degree of the map $\gamma$ is even. In Section 4 we prove 
that, if we start with a family of curves that corresponds to 
a map from torus to $S^2$ of odd degree,
then this construction will produce a
sweepout in which each curve in our modified family consists of arcs
of some curve in the original family.

Our argument works in the same manner if we start from any family of 
closed curves which corresponds to a map from the torus to $S^2$ of odd
degree. In particular, the methods above prove the following result, which partially answers a question of
N. Hingston and H.-B. Rademacher which appeared in \cite{HR} and \cite{BM}.

Let $M$ be a Riemannian manifold.
Given a homology class $X \in H_1 (\Lambda M, \mathbb{Z})$ define a critical level of
$X$ to be the following min-max quantity

$$cr(X) = \inf_{\gamma \in X} \sup_{t \in S^1} \sqrt{E(\gamma_t)} $$
where the infimum runs over all families of curves in the homology class $X$.
It is a standard result in Morse theory that every critical level
corresponds to a closed geodesic on $M$
of length equal to $cr(X)$.
Given a homology class $X$ of the free loop space of $M = (S^n, g)$ of infinite order, N. Hingston and H.-B. Rademacher asked
how $cr(X)$ and $cr(kX)$ are related for each integer $k$. We answer this question for specific values of $X$, $k$, and $n$:
%the case when $M=(S^2,g)$ and $X$ is any homology class in $H_1(\Lambda, \mathbb{Z})$ of odd degree.  
%This result
%is proved in a manner identical to that employed to prove Theorem \ref*{thm:slicing}.

\begin{corollary} \label{cor:hingston}
If $X$ is the generator of $H_1 (\Lambda M, \mathbb{Z})$  and $k$ is odd, then
$$cr(X)=cr(k X).$$
\end{corollary}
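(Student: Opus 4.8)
The plan is to deduce this from Theorem \ref{thm:simple_sweepout} together with the remark (made just before the corollary) that the construction applies to any family of closed curves representing a map from the torus to $S^2$ of odd degree. The inequality $cr(X) \le cr(kX)$ is the trivial direction: given any family $\gamma$ in the class $kX$, one can reparametrize or concatenate to produce a family in $X$ without increasing the maximal energy — more precisely, a family realizing $kX$ already \emph{contains} (after passing to an appropriate subinterval or by the standard loop-space multiplication argument) a subfamily representing $X$, so $\sup_t \sqrt{E(\gamma_t)}$ over the $kX$-family bounds $cr(X)$. Taking the infimum over all such families gives $cr(X) \le cr(kX)$.

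The substantive direction is $cr(kX) \le cr(X)$ for $k$ odd. Here I would start with an arbitrary family $\{\gamma_t\}$ realizing the class $X$ with $\sup_t \sqrt{E(\gamma_t)} < cr(X) + \epsilon$, and hence with all curves of energy less than some $E$ (and length less than some $L$, by Cauchy--Schwarz on closed curves). Applying the two-step construction behind Theorem \ref{thm:simple_sweepout} — the surgery of Section 3 to make the family start and end at a constant curve, followed by the redrawing argument from \cite{CL} of Section 4 — produces a simple sweepout $\{\tilde\gamma_t\}_{t \in [-1,1]}$ with energy still less than $E$, whose curves are arcs of the original curves (so the length bound $L$ is also preserved). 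Since $k$ is odd, concatenating $k$ copies of this simple sweepout (running $t$ through $[-1,1]$ $k$ times in succession, inserting the shared constant-curve endpoints as the gluing points) yields a family representing $kX$: the degree of the resulting torus-to-$S^2$ map is $k$ times the degree of the sweepout, which is odd, hence nonzero, so the family is noncontractible in the appropriate sense and lies in $kX$. Crucially, concatenation does not increase the maximal energy, so $\sup_t \sqrt{E} < cr(X) + \epsilon$ still holds for this $kX$-family, giving $cr(kX) \le cr(X) + \epsilon$; letting $\epsilon \to 0$ finishes the proof.

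The main obstacle I anticipate is bookkeeping around the precise meaning of "realizes the class $kX$" after concatenation: one must check that $k$-fold concatenation of a sweepout (a generator of $H_1(\Lambda, \mathbb{Z})$) genuinely represents $kX$ in $H_1(\Lambda M, \mathbb{Z})$, and that the odd-degree hypothesis is exactly what makes the surgery-and-redrawing machinery output something noncontractible rather than a null-homotopic family. This is where the degree formula (Proposition \ref{degree formula}) and the parity discussion of Section 4 do the real work: for even $k$ the concatenated family would have even degree and the construction could collapse it, which is consistent with the corollary's restriction to odd $k$. A secondary technical point is ensuring the constant-curve endpoints match up so the concatenation is continuous, but this is automatic since the simple sweepout from Theorem \ref{thm:simple_sweepout} is built to begin and end at constant curves.
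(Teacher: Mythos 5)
Your proof has the two directions of the equality interchanged, and the direction you dismiss as trivial is precisely where the content of the corollary lies. The genuinely easy inequality is $cr(kX)\le cr(X)$: take a family in class $X$ with $\sup_t\sqrt{E}<cr(X)+\epsilon$ and precompose with the degree-$k$ self-map of the parameter circle (equivalently, concatenate $k$ copies of the loop in $\Lambda$); this represents $kX$, uses exactly the same set of curves, and works for \emph{every} integer $k$, with no need for the surgery or redrawing machinery. Your elaborate argument for $cr(kX)\le cr(X)$ is therefore correct but vastly overpowered, and the fact that it works for all $k$ should have been a warning sign that the odd-degree hypothesis must be consumed elsewhere.

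The substantive inequality is $cr(X)\le cr(kX)$, and your argument for it is wrong: a loop in $\Lambda$ representing $kX$ does not ``contain a subfamily representing $X$.'' Restricting to a subinterval of $S^1$ does not produce a closed loop in $\Lambda$, and there is no homological division-by-$k$ operation at the level of families; if such an extraction were possible the Hingston--Rademacher question would be vacuous for all $k$. The correct argument is the one the paper alludes to in the sentence preceding the corollary: a family in class $kX$ corresponds to a torus-to-$S^2$ map of degree $\pm k$, which is odd precisely when $k$ is odd. Given such a family with $\sup_t\sqrt{E}<cr(kX)+\epsilon$, the surgery of Section 3 (whose existence of a good pair uses only that the sum of signs of Type~1 moves is $2k\equiv 2\bmod 4$, i.e.\ oddness of the degree, via Proposition \ref{degree formula}) together with the redrawing argument of Section 4 produces a simple sweepout of degree $\pm 1$, hence a family in class $\pm X$, through curves of no greater length and energy. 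Since $cr(-X)=cr(X)$ (reverse the parametrization), this gives $cr(X)\le cr(kX)+\epsilon$. You have all the right ingredients --- you correctly identify that odd degree is what feeds the surgery-and-redrawing machinery --- but you aimed them at the wrong inequality, leaving the essential one unproved.
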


In the last section, we use the methods developed in this article to prove a conjecture 
from \cite{CL} about isotopies of curves on a Riemannian 2-surface:

\begin{theorem} \label{thm:isotopy}

Let $M$ be a $2$-dimensional Riemannian manifold (with or without boundary) and
let $\gamma_0$ and $\gamma_1$ be two simple closed curves which are homotopic
through curves of length less than $L$. We have that at least one of the following statements holds:

\begin{enumerate}

\item $\gamma_0$ and $\gamma_1$ are homotopic through simple closed curves of length less than $L$.

\item $\gamma_0$ and $\gamma_1$ are each contractible through simple closed curves
	of length less than $L$.  Here, we mean that all curves except for the final curve are simple.
\end{enumerate}
\end{theorem}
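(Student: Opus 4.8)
The plan is to use Theorem~$1.1'$ of~\cite{CL} as the entry point, reduce the statement to a single case, and resolve that case with the surgery and the ``back and forth in time'' techniques of Sections 3 and 4. Apply Theorem~$1.1'$ of~\cite{CL} to the given homotopy between $\gamma_0$ and $\gamma_1$ through curves of length less than $L$. It returns either an isotopy from $\gamma_0$ to $\gamma_1$ through simple closed curves of length less than $L$, in which case conclusion (1) holds and we are done, or an isotopy $\Phi$ from $\gamma_0$ to $-\gamma_1$ through simple closed curves of length less than $L$. Assume the latter. Concatenating the reverse of the given homotopy (from $\gamma_1$ to $\gamma_0$) with $\Phi$ yields a homotopy $G$ from $\gamma_1$ to $-\gamma_1$ through curves of length less than $L$. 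It then suffices to prove that $\gamma_1$ is contractible through curves of length less than $L$ that are all simple except possibly the last: reversing the orientation of every curve in such a contraction contracts $-\gamma_1$ in the same way (again through curves all simple but the last), and prepending $\Phi$ then contracts $\gamma_0$; so both $\gamma_0$ and $\gamma_1$ would satisfy conclusion (2).

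Next I would observe that, since $\gamma_1$ is simple and $G$ homotopes it to its reverse, $\gamma_1$ must be null-homotopic: passing to the cover of $M$ corresponding to $\langle[\gamma_1]\rangle$ — an annulus, or in the non-orientable case a M\"obius band — and noting that the core of such a surface is never freely homotopic to its reverse forces $[\gamma_1]$ to have finite order in $\pi_1(M)$, hence to be trivial since surface groups are torsion-free away from $\mathbb{RP}^2$ (the case $M=\mathbb{RP}^2$ being handled separately). In particular $\gamma_1$ bounds an embedded disk $D\subset M$, and likewise $\gamma_0$. What remains is to contract $\gamma_1$ through simple curves of length less than $L$; since a naive radial contraction inside $D$ need not stay short, the contracting curves must instead be assembled from arcs of curves we already possess.

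For this I would run the redrawing-graph construction of~\cite{CL}, refined by the surgery of Section 3 and the degree formula~\ref{degree formula}, on a closed-up version of $G$ — for instance on the ``double'' $\Gamma = G\cdot(-G)$, a torus's worth of curves of length less than $L$ based at $\gamma_1$, where $-G$ denotes $G$ with the orientation of every curve reversed. Since $\gamma_1$ is simple, its only redrawings are $\gamma_1$ and $-\gamma_1$; the surgery of Section 3 inserts, along a self-intersection of a curve in $\Gamma$, a subfamily that begins and ends at a constant curve, and the degree formula (a consequence of Whitney's turning-number theorem) pins down the orientation with which the relevant small loop is created and destroyed. Carrying this datum along a path in the redrawing graph should produce a family of simple closed curves of length less than $L$ that contracts $\gamma_1$, the sole non-simple curve being the constant curve created by the surgery.

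The main obstacle is precisely this last step: one must check that the degree/turning-number bookkeeping comes out so that the Section 3 surgery is performable along $\Gamma$ and so that the resulting redrawing path terminates at a genuine contraction of $\gamma_1$ rather than circling back to $-\gamma_1$. Alongside this sits the customary uniform length estimate — every intermediate curve, including those obtained by cutting and regluing at the finitely many self-intersections and then perturbing to simplicity, has length less than $L$ exactly because it is built from arcs of curves of $\Gamma$ that already have length less than $L-\delta$ for some fixed $\delta>0$, so only an arbitrarily small increase is incurred. Finally, although Sections 3 and 4 are written for $(S^2,g)$, only the local surgery at a self-intersection, the redrawing graph, and the planar turning-number theorem enter here, and these are local and planar, hence valid on an arbitrary surface; when $M$ carries extra topology one simply performs the construction inside the disk $D$ bounded by $\gamma_1$.
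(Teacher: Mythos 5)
There are two genuine gaps. First, your reduction of the non-contractible case is based on a false claim: a simple closed curve that is freely homotopic (even isotopic) to its reverse need not be null-homotopic. The fiber of the Klein bottle, viewed as a twisted $S^1$-bundle over $S^1$, is a simple closed curve of infinite order in $\pi_1$ that is isotopic to its reverse by sliding once around the base. Your covering-space argument does not detect this because the free homotopy from $\gamma_1$ to $-\gamma_1$ is carried by a conjugating element that need not lie in $\langle[\gamma_1]\rangle$, so it does not lift to the annulus or M\"obius cover; the issue is not torsion and is not confined to $\mathbb{RP}^2$. The paper avoids this entirely by disposing of the non-contractible case first, quoting the stronger result of \cite{CL} that homotopic non-contractible simple curves are isotopic (with the correct orientation) through curves of length less than $L$, so that conclusion (1) holds outright; your argument, which only uses the weaker ``$\gamma_1$ or $-\gamma_1$'' dichotomy of Theorem $1.1'$, gets stuck trying to contract a curve that may not be contractible.

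Second, in the contractible case the step you yourself flag as ``the main obstacle'' is the entire content of the proof, and the framework you set up for it does not support the turning-number bookkeeping. The degree formula (Proposition \ref{degree formula}) and Whitney's theorem require the curves to live in the plane or the sphere; on a general $M$ the turning number is not defined, and your suggestion to ``perform the construction inside the disk $D$'' fails because the homotopy $G$ has no reason to stay inside $D$. The paper's proof lifts the whole homotopy to the universal cover and then splits into two genuinely different cases: on $\mathbb{R}^2$ one compares $\mathcal{T}_0$ with $\mathcal{T}_1$ directly, while on $S^2$ one must first prove Lemma \ref{lem:degree} to find a projection point $p$ with an even number of preimages, precisely because passing through $p$ shifts turning numbers by $2$ and would otherwise ruin the count. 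Only then does the dichotomy become clean: either the tracked difference of turning numbers vanishes and the \cite{CL} isotopy already ends at $\gamma_1$ (conclusion (1)), or it equals $\pm 2$, forcing the signed sum of Type 1 moves to be $\pm 2$, which produces a good pair and hence a contraction through simple curves (conclusion (2)). Your proposal names the right ingredients but does not supply this case analysis, so the dichotomy at the heart of the theorem remains unproved.
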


To illustrate this theorem, let $\gamma_0$ be a small contractible loop on a surface $M$ and let
$\gamma_1$ be the same loop with the opposite orientation and suppose they are homotopic through short curves. 
If $M$ is a torus then $\gamma_0$ and $\gamma_1$ are homotopic, but not isotopic. If $M$ is a sphere then $\gamma_0$ and $\gamma_1$ are isotopic, but possibly only through curves of much larger length. 

\vspace{0.1in}

\textbf{Acknowledgments.} 
The authors would like to thank Alexander Nabutovsky and Regina Rotman
for introducing them to the questions studied in this paper, and for 
many useful discussions. They would also like to thank Michael Freedman for
directing them to \cite{Freed}, and for pointing out that Theorem \ref*{Freedman} followed from their results.
Both authors were supported in part by the Government of Ontario
through Ontario Graduate Scholarships.

%%%%%%%%%%%%%%%%%%%%%%%%%%%%%%%%%%%%%%%%%%%%%%%%%%%%%
\section{Reidemeister moves and degrees of maps}
%%%%%%%%%%%%%%%%%%%%%%%%%%%%%%%%%%%%%%%%%%%%%%%%%%%%%

\subsection{Generic sweepouts} \label{sec:generic}

We begin by defining three types of \emph{Reidemeister moves}, shown in
Figure \ref*{fig:reidemeister_moves}.  These are ways in which a 
generic one parameter family of smooth curves may locally self-interact.  As in this figure, we categorize them
as Type 1, Type 2, and Type 3 moves.

\begin{figure}
   \centering   
    \includegraphics[scale=0.8]{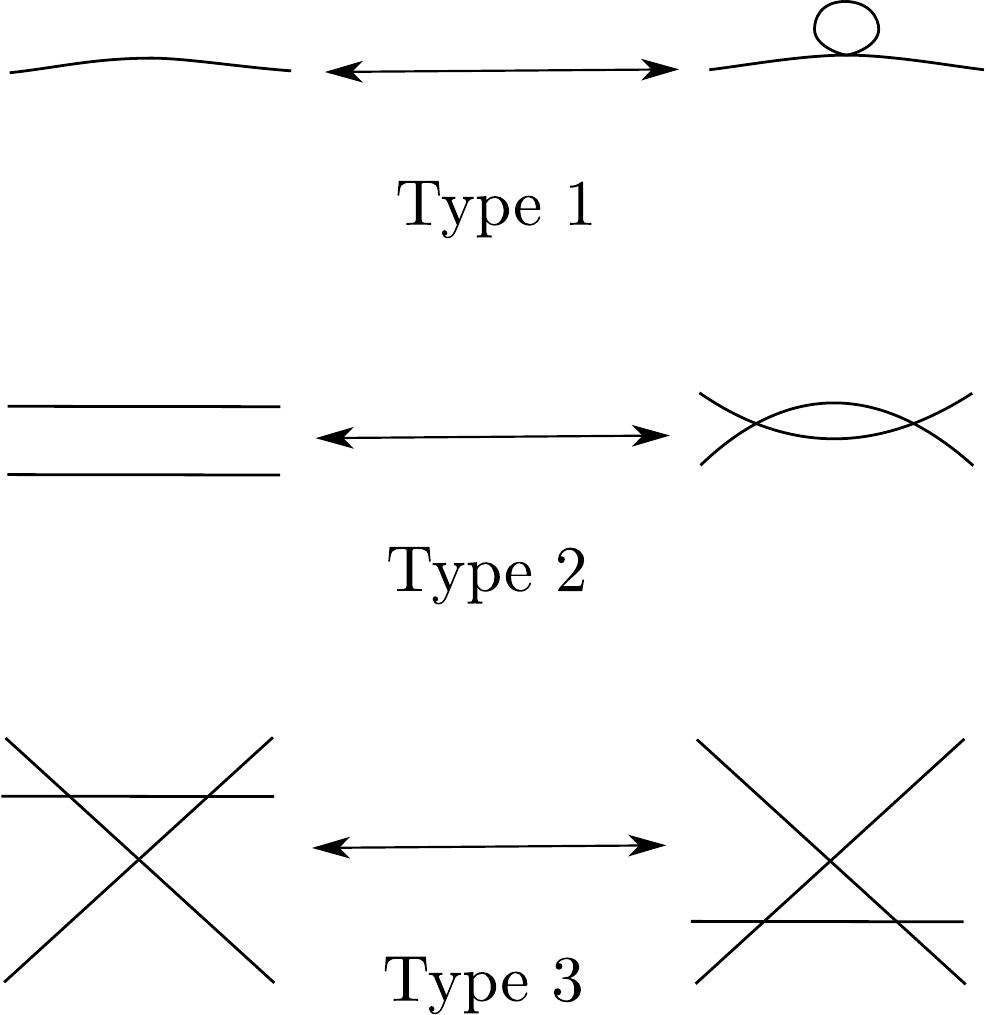}
    \caption{Reidemeister moves} \label{fig:reidemeister_moves}

\end{figure}

Now, fix a map $\gamma: S^1 \rightarrow \Lambda$.  Thom's Multijet Transversality Theorem (see \cite{GG}, \cite[Proposition 2.1]{CL}) implies that we can perturb $\gamma$ to $\widetilde{\gamma}$
so that $\widetilde{\gamma}$ has the following properties:
\begin{enumerate}
  \item  For every $t \in S^1$, $length(\widetilde{\gamma}_t) < L $,
  $E(\widetilde{\gamma}_t)<E$.
  \item  $\widetilde{\gamma}$ is a sweepout.
  \item  For each $t \in S^1$, $\widetilde{\gamma}_t$ contains only isolated self-intersections.  That is, for each self-intersection of each $\widetilde{\gamma}_t$, there is an open
	 ball centered at that self-intersection that contains no other self-intersection.
  \item  $\widetilde{\gamma}$ is composed of a finite set of Reidemeister moves.  That is, there is a finite sequence of points $t_1, \dots, t_n \in S^1$ such that
    exactly one Reidemeister move occurs between $t_i$ and $t_{i+1}$ for each $i \in \{ 1, \dots, n - 1 \}$, and exactly one Reidemeister move
    occurs between $t_n$ and $t_1$.  At each time $t_i$, the curve has transverse intersections only.
\end{enumerate}

%See Proposition 2.1 in \cite{CL} for a proof of this fact.
%; the proof uses a parametric version of Thom's Multijet Transversality Theorem (see \cite{GG} and \cite{Bru}).
Since any smooth map can be perturbed to have this property, we will assume that $\gamma$ has already been put into this form.  Additionally,
for any sweepout $\gamma$, we say that it is \emph{generic} if it has the above properties.  We define a generic homotopy and a generic family $S^1 \rightarrow \Lambda$ of closed curves in analogous ways.

%We now state a lemma concerning generic maps that contain no Type 1 moves.

%%%%%%%%%%%%%%%%%%%%%%%%%%%%%%%%%%%%
\subsection{Degree and Type 1 Reidemeister moves}
%%%%%%%%%%%%%%%%%%%%%%%%%%%%%%%%%%%%

We will say that a continuous family of closed curves
$\{\gamma_t \}_{t \in S^1}$ has degree $d$
if the corresponding map from the torus to $S^2$
has degree $d$.
In this section, we will describe a procedure that assigns either a $+1$ or a $-1$ to
each Type 1 move in a sweepout.  This will be called the sign of the corresponding
Type 1 move.  We will then 
prove a formula relating the sum of these signs to the degree of the sweepout.
This formula is a consequence of Whitney's theorem that 
connected components of the
 space of immersed curves
in $\R^2$ are classified by their turning numbers.

The turning number $T(\gamma)$ of an immersed curve $\gamma \in \R^2$
is defined as the degree of the Gauss map 
sending a point $t \in S^1$ to $\frac{d \gamma}{dt}(t)/ |\frac{d \gamma}{dt}(t)|$.

Fix an orientation on $S^2$.
Let $\gamma$ be a generic homotopy of closed curves on $S^2$.
Suppose a Type 1 move happens at time $t'$.
Note that for a small $\delta>0$,
the curves $\gamma_{t}$ with $t'-\delta \leq t \leq t'+\delta$ are immersed 
everywhere except for a small open disc
where a small loop is created or destroyed. 
Fix a point $p \in S^2$ such that $p$ does not intersect $\gamma_{t}$
for all $t \in [t'-\delta,t'+\delta]$.
Let $St_{p}: S^2 \setminus \{ p \} \rightarrow \R^2$ denote
the stereographic projection with respect to $p$.
We say that the Type 1 move is positive if the turning number
of the curve $St_{p}(\gamma_{t})$ in $S^2 \setminus \{ p \}$
increases by one and we say that it is negative if it decreases by $1$.
Observe that this definition is independent of the choice
of $p$ as long as $\gamma_{t}$ does not intersect 
$p$ for $t'-\delta \leq t \leq t'+\delta$.

\begin{proposition} \label{degree formula}
Let $\gamma: S^1 \rightarrow \Lambda$ be a generic family of closed curves
of degree $d$.
The sum of the signs of all Type 1 moves is equal to $2d$.
\end{proposition}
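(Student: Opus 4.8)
The plan is to reduce the statement to a count of turning numbers via stereographic projection, keeping track of how the turning number changes as $t$ traverses $S^1$ once. Fix a point $p \in S^2$ and consider the (open) set of times $t$ for which $\gamma_t$ avoids $p$; on each such interval we may form the planar immersed curves $St_p(\gamma_t)$ and speak of their turning number $T(St_p(\gamma_t))$. By Whitney's theorem, $T$ is locally constant on the portion of the homotopy where the curves are genuine immersions, so it can change only (i) across a Type 1 move, where by definition it changes by the sign of that move, or (ii) as $t$ crosses a time where $\gamma_t$ passes through $p$. So the first task is to understand exactly how $T(St_p(\gamma_t))$ jumps when the curve sweeps across the projection point $p$.

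The key local computation is this: when $\gamma_t$ moves across $p$, a strand of the planar curve $St_p(\gamma_t)$ escapes to infinity in one direction and re-enters from the opposite direction; the net effect on the turning number is $\pm 2$, with the sign determined by the local orientation of the strand relative to the fixed orientation of $S^2$ (equivalently, by the sign of the intersection of the $1$-parameter trace of $\gamma$ with the point $p$, counted with the standard orientations). Summed over the whole circle $S^1$ of times, these crossing contributions add up to $2\deg(\gamma) = 2d$, because the total signed count of times that the family $\gamma\colon S^1 \times S^1 \to S^2$ hits the regular value $p$ is exactly the degree $d$ of that map, and each such hit contributes $\pm 2$ to the turning-number bookkeeping — with a fixed sign convention making "$\gamma_t$ sweeps across $p$ positively" contribute $+2$. (One should choose $p$ to be a regular value of $\gamma$, so that the crossings are transverse and finite in number; genericity of $\gamma$ makes this possible.)

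Now I assemble the two ingredients. Start at some base time $t_0$ with $\gamma_{t_0}$ avoiding $p$, and follow $T(St_p(\gamma_t))$ as $t$ goes once around $S^1$ back to $t_0$. Since $T$ returns to its initial value, the total change is $0$; but the total change also equals (sum of signs of Type 1 moves) $+$ (sum of the $\pm 2$ contributions from crossings of $p$). Hence
$$\sum_{\text{Type 1 moves}} \operatorname{sign} \;=\; -\sum_{\text{crossings of }p} (\pm 2) \;=\; -(\pm 2)\deg(\gamma).$$
It remains only to check that the sign conventions are consistent — i.e. that "positive Type 1 move" (turning number increases) and "positive crossing of $p$" are set up so that the final answer is $+2d$ rather than $-2d$; this is a matter of fixing the orientation on $S^2$ once and tracing one explicit model example (a small loop being born, and a round circle sweeping across a point) to pin down both conventions simultaneously. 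The main obstacle, and the step requiring the most care, is exactly this local analysis of the turning-number jump across $p$: one must verify that the jump is always $\pm 2$ (never $0$ or $\pm 1$), that its sign is intrinsic (independent of the parametrization and of which strand crosses), and that it matches the degree count — in other words, that "number of crossings of $p$ counted with turning-number sign" genuinely computes the degree of $\gamma$ and not, say, twice it or zero. Everything else is a direct application of Whitney's theorem and a telescoping sum around $S^1$.
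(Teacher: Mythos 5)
Your proposal is correct and follows essentially the same route as the paper: project stereographically from a generic/regular point, invoke Whitney's theorem so the turning number of $St_p(\gamma_t)$ changes only at Type 1 moves (by $\pm 1$) and at crossings of $p$ (by $\pm 2$, with sign given by the local degree of the map $S^1\times S^1\to S^2$ at that preimage), and conclude by the telescoping argument that the total change around $S^1$ is zero. The paper pins down the sign of the $\pm 2$ jump with an explicit local picture (its Figure 3), exactly the model computation you defer to at the end.
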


\begin{proof}
Let $\{\gamma_t\}_{t \in S^1}$ be a generic homotopy
and let $C$ be the union of self-intersection points
of $\gamma_t$ for all $t$. Observe that, for a generic
homotopy $\gamma$, the set $C$ has Hausdorff dimension $1$.

Define $U \subset S^2 \setminus C$ to be the set of points such
that, for each $x\in U$, there are only finitely many times
$t \in \{t_1, ..., t_k \}$ when $\gamma_t$ intersects $x$. Moreover,
for each $t_i$ and $s_i$ with $\gamma_{t_i}(s_i) = x$, we require
that $\frac{ \partial \gamma}{\partial t} (t_i,s_i) \neq 0 $.
Since $\gamma$ is generic, $U$ is a set of full measure in $S^2$. 
Fix $x \in U$. 
For each moment of time when $\gamma_t$ passes through
$x$, define the local degree $d_i(x)$ to be the sign of the frame
$(\frac{ \partial \gamma}{\partial t} (t_i,s_i),
\frac{ \partial \gamma}{\partial s} (t_i,s))$.
The sum of all $d_x(i)$'s is then equal to the degree of
$\gamma_t$.

%Let $St_x: S^2 \setminus \{ x \} \rightarrow \R^2$ be a stereographic projection
Consider a segment of the homotopy
$\gamma_t$ between $t_i$ and $t_{i+1}$. 
By a result of Whitney \cite{W} the turning number of
the composition of $\gamma_t$ with the stereographic projection $St_x(\gamma_t)$ does not change unless $\gamma_t$
undergoes a Type 1 move.  In this case, the turning number changes by $1$ (respectively $-1$) 
under a positive (respectively negative) Type 1 move.

When $t$ approaches $t_i$, the curve $St_x(\gamma_t)$
is contained in some large disc $D$ except for a simple arc $a$ which 
stretches to infinity. 
As $\gamma_t$ passes through
$x$ at time $t_i$, the arc $a$ is replaced by an arc $b$ as shown in Figure \ref*{fig:degree}.
We observe that the turning number
of $St_x(\gamma_t)$ decreases by $2$ if $d_x(i)$
is positive and increases by $2$ if $d_x(i)$
is negative. 

\begin{figure}
   \centering   
    \includegraphics[scale=0.25]{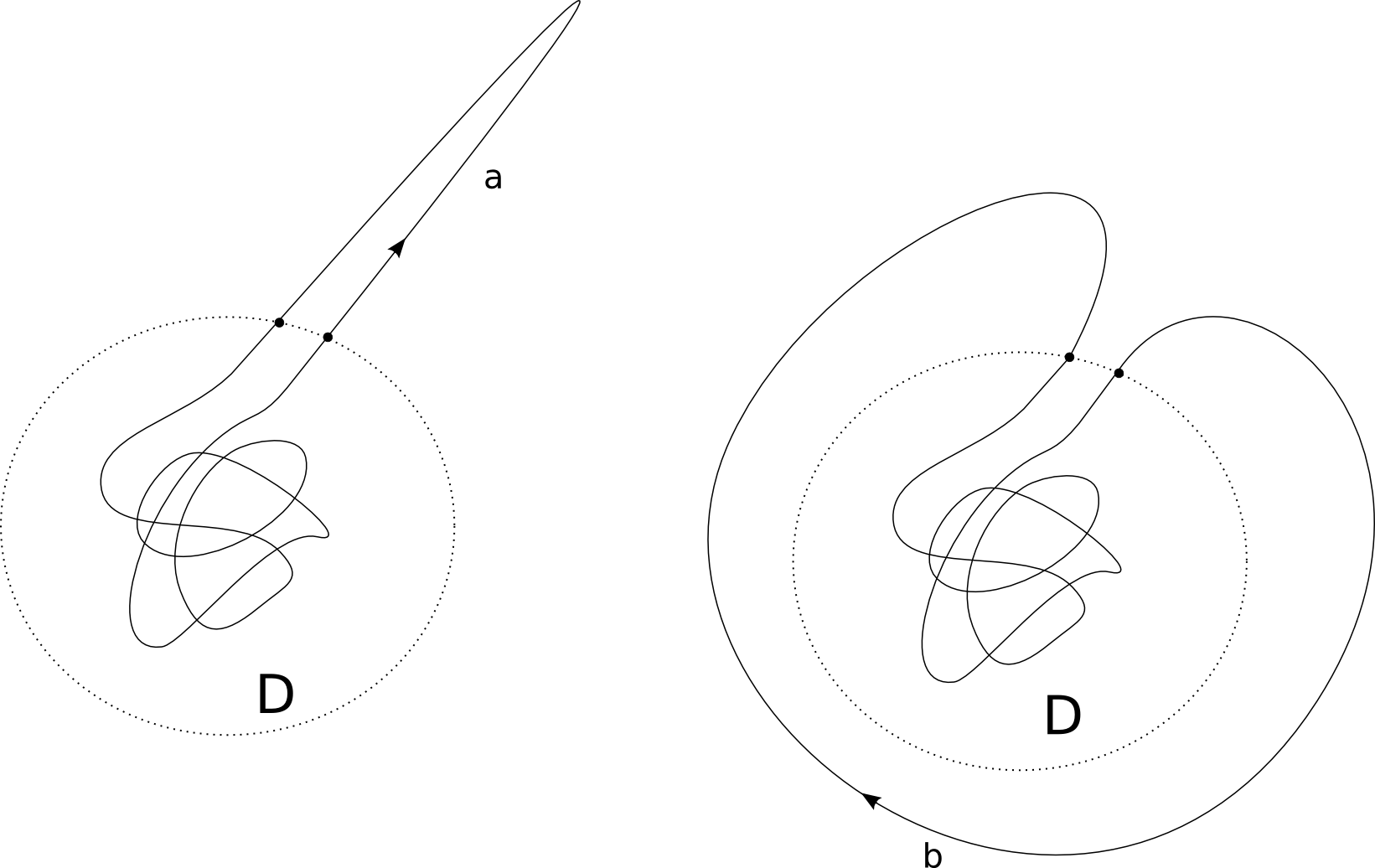}
    \caption{The degree of $St_x(\gamma_t)$ changes by $2$ when $\gamma_t$ passes through the point
    at infinity} \label{fig:degree}
\end{figure}

Since the homotopy is defined on $S^1$, it starts and ends on the 
same curve with the same turning number. 
It follows that the sum of signs of Type 1 moves equals
$2 \sum d_x(i)$.
\end{proof}

When $\gamma$ has no Type 1 moves this result simplified to the following statement.

\begin{corollary}
  \label{cor:regular_map}
  If $\gamma: S^1 \rightarrow \Lambda$ is a generic sweepout
  that contains no Type 1 Reidemeister moves, then $\gamma$
  is contractible.
\end{corollary}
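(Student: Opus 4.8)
The plan is to combine Proposition~\ref{degree formula} with the classical fact that a map from the torus to $S^2$ is determined up to homotopy by its degree. First I would apply Proposition~\ref{degree formula} directly: a generic family with no Type 1 moves has an empty collection of Type 1 moves, so the sum of their signs is $0$; since this sum equals $2d$, the degree $d$ of $\gamma$ vanishes. Equivalently, the associated map $F\colon T^2 \to S^2$ given by $F(t,s) = \gamma_t(s)$ has degree zero.

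Next I would invoke the Hopf classification theorem: since $T^2$ is a $2$-dimensional CW complex and $S^2$ is simply connected, the pullback of a generator of $H^2(S^2;\mathbb Z)$ gives a bijection $[T^2, S^2] \cong H^2(T^2;\mathbb Z) \cong \mathbb Z$, and under this bijection the class of a map is its degree (there are no secondary obstructions, as $T^2$ has no cells above dimension $2$). Hence $F$ is null-homotopic, and we may choose a smooth homotopy $G\colon T^2 \times [0,1] \to S^2$ with $G_0 = F$ and $G_1$ the constant map at some point $p \in S^2$. Adjoining back in the loop variable, $G$ is exactly a homotopy, through families $S^1 \to \Lambda$, from $\gamma$ to the family that is constantly the point-curve at $p$; since that family is the constant map $S^1 \to \Lambda$, it is contractible, and therefore so is $\gamma$.

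The only step that is not completely formal is the topological identification $[T^2,S^2] \cong \mathbb Z$ via degree (equivalently, that the degree of the adjoint torus map detects the class in $H_1(\Lambda S^2;\mathbb Z) \cong \mathbb Z$); granting this, the corollary is immediate from Proposition~\ref{degree formula} together with the exponential law for mapping spaces. Read contrapositively, the argument shows that any noncontractible generic family of closed curves on $(S^2,g)$ — in particular any generic sweepout — must undergo at least one Type 1 Reidemeister move.
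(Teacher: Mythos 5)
Your proof is correct and follows the paper's intended route: the paper presents this corollary as an immediate consequence of Proposition~\ref{degree formula} (no Type 1 moves forces degree $0$), with the identification ``degree $0$ $\Rightarrow$ contractible loop in $\Lambda$'' left implicit, which you correctly supply via Hopf's classification theorem and the exponential law. (The paper's subsequent Remark sketches a genuinely different proof via the $h$-principle and $\pi_1$ of components of the space of immersed curves, but that is offered only as an alternative.)
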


\begin{remark}
Corollary \ref*{cor:regular_map} also follows from the fact that
the fundamental group of each connected component of the space of 
immersed curves is isomorphic to $\mathbb{Z}_2$ (see \cite{S}, \cite{I}, and \cite{T}).
Here's a sketch of the proof of this fact. Let 
$\gamma_0$ be an immersed curve and let $F_{\gamma_0}$
be the connected component of the space of immersed curves containing $F_{\gamma_0}$.
It follows from the parametric h-principle \cite{Gr}
that the space of immersed curves is weak homotopy equivalent to 
$\Lambda S M$, the space of free loops on the spherical tangent bundle $SM$ of $M$. It can be shown (see \cite{H}) that the group
$\pi_1 (F_{\gamma_0})$ is isomorphic to the centralizer 
of the class represented by $(\gamma_0 (s), \frac{d \gamma_0}{ds} (s))\in \Lambda S M$
in $\pi_1(S M, (\gamma_0 (0), \frac{d \gamma_0}{ds}(0))$. 
%where $\gamma_t$ is parametrized by $s \in S^1$.
Since $S S^2 \cong SO_3 \cong \mathbb{R}P^3 $
we obtain that $\pi _1 (F_{\gamma_0})= \mathbb{Z}_2$.

The inclusion map $\iota: F_{\gamma_0} \rightarrow \Lambda M$
induces a homomorphism $\iota_{*}: \pi_1(F_{\gamma_0}) \rightarrow \pi_1(\Lambda M)$.
Since $\pi_1(F_{\gamma_0})= \mathbb{Z}_2$ and $\pi_1(\Lambda M) = \mathbb{Z}$,
$\iota_{*}$ must be trivial. 
%The result for homology follows by composing this map with
%the Hurewitz isomorphism.
\end{remark}

\subsection{Degree of a family containing constant curves}

From Corollary \ref*{cor:regular_map} we obtain 
the following characterization of the degree
of a family of curves that consists of immersed curves and constant curves.
This characterization will be important in the proof of our main
theorem.
Let $\gamma: S^1 \rightarrow \Lambda $ be a family of closed curves such that, for some closed interval
$I \subset S^1$, we have that $\gamma_t$ is a constant curve for all 
$t \in I$ and $\gamma_t$ is a generic homotopy
with no moves of Type 1 for $t \in S^1 \setminus I$.
We can show then that the family $\gamma$ has degree $0$
or $\pm 1$.  If we consider $\gamma$ on $S^1 \setminus I$, we get a map from an open interval of $S^1$ to $\Lambda$.
After a small perturbation, we may assume that this map is generic, and so if we choose points close to the endpoints of the
open interval, the corresponding curves are simple.  The degree of $\gamma$ then depends on the orientations of these curves.
This dependence is described as follows.  We assume that $\gamma$ has been perturbed slightly so the above property is true.

Fix an orientation on the sphere. 
As above, for a small $\delta>0$, let $t_1$ and $t_2$ be two points 
at the distance $\delta$ from the two endpoints of the open interval
$S^1 \setminus I$.  If $\delta$ is chosen to be sufficiently small, then for each $i=1,2$, the curve $\gamma_{t_i}$ 
is a simple closed curve  
bounding a small disc $D_i$. The orientation of the sphere
then induces an orientation of $D_i$, which in turn induces an orientation
on $\partial D_i$. If this orientation coincides with the
orientation of $\gamma_{t_i}$ we say that $\gamma_{t_i}$
is positively oriented and we say that it is negatively oriented 
otherwise. We say that $\gamma$ has the same 
orientation at the endpoints of $S^1 \setminus I$ if
for all sufficiently small $\delta > 0$
both $\gamma_{t_1}$ and $\gamma_{t_2}$ are oriented positively
or both are oriented negatively. Otherwise, we say that
$\gamma$ has different orientations at the endpoints of 
$S^1 \setminus I$.

\begin{corollary} \label{cor:deg_0}
Let $\gamma$ be as described above. If $\gamma$ has the same orientation
at the endpoints of $S^1 \setminus I$, then $\gamma$ has
degree $0$. If $\gamma$ has different orientations
at the endpoints of $S^1 \setminus I$, then $\gamma$ has
degree $\pm 1$.
\end{corollary}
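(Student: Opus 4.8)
The plan is to adapt the turning-number argument from the proof of Proposition \ref{degree formula}, running it along the open arc $J := S^1 \setminus I$ instead of around all of $S^1$, and using that the sub-arc $I$ of constant curves contributes nothing to the count. First I would fix the orientation of $S^2$ and choose a point $x \in S^2$ which is generic in the sense of that proof — only finitely many $\gamma_t$ pass through $x$, each transversally, so that the local degrees $d_x(i)$ are defined — and which in addition lies outside the closed discs $\bar D_1$ and $\bar D_2$ and outside a neighbourhood of the (measure-zero) arc in $S^2$ traced out by the constant curves $\gamma_t$, $t\in I$. Such an $x$ exists since the set of bad points has measure zero. Because $\gamma$ restricted to $J$ is generic and has no Type 1 moves, each $\gamma_t$ with $t\in J$ is immersed (a cusp would force a Type 1 move), so $St_x(\gamma_t)$ has a well-defined turning number $u(t)$ except at the finitely many times at which $\gamma_t$ sweeps across $x$.

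Next I would follow how $u(t)$ changes as $t$ runs from $t_1$ to $t_2$ within $J$. By Whitney's theorem $u$ does not change across Type 2 or Type 3 moves, and by hypothesis there are no Type 1 moves on $J$; so the only changes occur when $\gamma_t$ passes through $x$, and at such a time $t_i$ the turning number changes by $-2 d_x(i)$, exactly as computed in the proof of Proposition \ref{degree formula} (see Figure \ref{fig:degree}). Hence $u(t_2) - u(t_1) = -2 \sum_{t_i \in J} d_x(i)$. The key point is that the sum of all local degrees over $t \in S^1$ equals $\deg(\gamma) = d$, while no time $t \in I$ contributes, since there $\gamma_t$ is the constant curve at a point $p(t) \ne x$; therefore $\sum_{t_i \in J} d_x(i) = d$ and $u(t_2) - u(t_1) = -2d$ (up to an overall sign depending on orientation conventions).

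Finally I would identify the two endpoint turning numbers. Since $\gamma_{t_i}$ is a simple closed curve bounding the small disc $D_i$ and $x \notin \bar D_i$, its image $St_x(\gamma_{t_i})$ is a small simple loop in $\R^2$, so $u(t_i)$ equals $+1$ if $\gamma_{t_i}$ is positively oriented and $-1$ if it is negatively oriented; this is precisely the convention by which the sign of a Type 1 move was defined. Consequently, if $\gamma$ has the same orientation at the two endpoints of $S^1\setminus I$ then $u(t_1) = u(t_2)$, forcing $2d = 0$ and $d = 0$; and if it has different orientations then $|u(t_2) - u(t_1)| = 2$, forcing $|d| = 1$, i.e.\ $d = \pm 1$.

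The step needing the most care is the joint choice of $x$ together with the accounting of local degrees: one must verify that $x$ can simultaneously be taken generic for $\gamma|_J$, disjoint from $\bar D_1 \cup \bar D_2$, and disjoint from the trace of the constant curves, and that under such a choice the sub-arc $I$ really contributes nothing, so that the signed number of sweeps of the family across $x$ along $J$ is exactly $d$. The remaining ingredients — immersedness of $\gamma|_J$, the jump of the turning number by $-2d_x(i)$ across a sweep, and the value $\pm 1$ of the turning number of a small boundary circle — are either immediate or already contained in the proof of Proposition \ref{degree formula}.
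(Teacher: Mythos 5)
Your proof is correct, but it takes a genuinely different route from the paper's. The paper argues by splicing auxiliary families into the constant interval $I$: in the same-orientation case it connects $\gamma_{t_1}$ to $\gamma_{t_2}$ by short simple closed curves of small total image, so that the degree is unchanged and Corollary \ref{cor:regular_map} (no Type 1 moves $\Rightarrow$ degree $0$) applies; in the different-orientation case it splices in a topological simple sweepout $\beta$ of the whole sphere, notes the resulting immersed family has degree $0$, and deduces that $\gamma|_{[t_1,t_2]}$ contributes signed preimage count $\pm 1$ since $\beta$ covers generic points exactly once. You instead run the turning-number bookkeeping from the proof of Proposition \ref{degree formula} \emph{relatively}, along the arc $J = S^1\setminus I$: with no Type 1 moves, the turning number of $St_x(\gamma_t)$ changes only by $-2d_x(i)$ at the sweeps across $x$, giving $u(t_2)-u(t_1)=-2d$, and the endpoint curves are small simple loops of turning number $\pm 1$ determined by their orientation. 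This is a clean ``boundary-term'' version of Proposition \ref{degree formula} for families over an interval; it is more quantitative and avoids constructing the auxiliary sweepout $\beta$ and the measure-theoretic ``same degree'' argument, at the cost of the genericity bookkeeping you flag. That flagged point is handled as you suggest: for $\delta$ small the curves $\gamma_t$ with $t\in J\setminus[t_1,t_2]$ lie in small discs around the endpoint constants, so a full-measure set of $x$ avoids them and the signed sweeps across $x$ along $J$ coincide with those along $[t_1,t_2]$ and sum to $d$. The only convention to fix is the global sign relating the orientation of $\partial D_i$ to the turning number of its projection, which, as you note, cancels since the same $x$ is used at both endpoints.
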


\begin{proof}
Suppose first that $\gamma$ has the same orientation at the endpoints.
Define a new homotopy $\widetilde{\gamma}$ as follows.  For a sufficiently small $\delta > 0$, choose $t_1$ and $t_2$ as above.
On the interval $[t_1,t_2] \subset S^1 \setminus I$, we 
set $\widetilde{\gamma}$ to be equal to $\gamma$. On the complement
of this interval we define a homotopy from
$\gamma_{t_1}$ to $\gamma_{t_2}$ through short simple
closed curves as depicted in Figure \ref*{fig:degree_0}.
Observe that $\widetilde{\gamma}$ will have the same degree 
as $\gamma$, since two maps coincide
on $[t_1,t_2]$ and on the complement of $[t_1,t_2]$
the images of both maps have very small measure.
By Lemma \ref*{cor:regular_map} the degree of
$\widetilde{\gamma}$ is $0$.

\begin{figure}
   \centering   
    \includegraphics[scale=1.00]{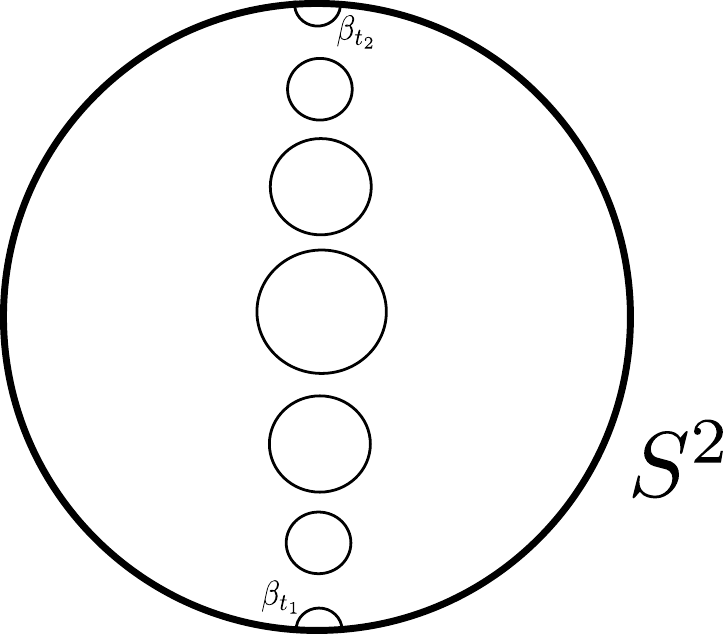}
    \caption{Connecting $\beta_{t_1}$ to $\beta_{t_2}$ by a sequence of simple closed curves} \label{fig:degree_0}
\end{figure}

Suppose $\gamma$ has different orientations at the endpoints.
As in the first case we will replace a portion
of $\gamma$ on the interval $I$ with a different
family of curves. Instead of a family of short curves
on Figure \ref*{fig:degree_0},
we will use a family that comes from a simple sweepout of the sphere.
Let $\beta$ be a simple sweepout of $S^2$ with the point
$\beta(-1)$ contained in the small disc $D_1$
and $\beta(1)$ contained in the small disc $D_2$.
Note that we do not require any control over the length or energy of 
curves in $\beta$ for this lemma, as the statement that we are proving 
is purely topological.

After a small perturbation we may assume that, for some
small $\epsilon >0$, the image of $\beta(-1+\epsilon)$
coincides with the image of $\gamma_{t_1}$ and
the image of $\beta(1-\epsilon)$
coincides with the image of $\gamma_{t_2}$.
Observe that $\beta$ has different orientation at the endpoints, and
so after a reparametrization we may assume that
$\gamma_{t_1}=\beta(-1+\epsilon)$ and
$\gamma_{t_1}=\beta(1-\epsilon)$.
We define a new family $\gamma'$ by setting
it to be equal to $\gamma$ on the interval
$[t_1,t_2]$, and to coincide with 
$\beta[-1+\epsilon,1-\epsilon]$ on the rest of $S^1$.
Since $\gamma'$ consists of immersed curves,
it must have degree $0$.
For any point $x\in S^2$ that does not lie in one 
of the small discs $D_1$ or $D_2$, there is exactly one preimage
of $x$ under $\beta$. Therefore, if $x$ is also a regular point
of $\gamma'$, then the total sum of signed preimages of $x$
under $\gamma$ restricted to $[t_1,t_2]$ must be equal to $\pm 1$.
We conclude that the degree of $\gamma$ is $\pm 1$.
\end{proof}

%%%%%%%%%%%%%%%%%%%%%%%%%%%%%%%%%%%%%%%%%%%%%%%%%%%%%%%%%%%%%
\section{Surgery on Sweepouts} \label{sec:surgery}
%%%%%%%%%%%%%%%%%%%%%%%%%%%%%%%%%%%%%%%%%%%%%%%%%%%%%%%%%%%%%
The purpose of this section is to perform surgery on sweepouts to prove the following proposition:

\begin{proposition}
  \label{prop:simple_sweepout}
  Given a generic sweepout $\gamma: S^1 \rightarrow \Lambda$ through curves of length (energy) less than $L$ (resp. $E$), we can find a sweepout $\widetilde{\gamma}$
  such that $\widetilde{\gamma}_0$ is a constant curve
%  , is simple on $S^1 \setminus [a,b]$, 
  and $\widetilde{\gamma}$ consists of curves of length (energy) less than $L$ (resp. $E$).
\end{proposition}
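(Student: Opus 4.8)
The plan is to single out one Type 1 Reidemeister move of $\gamma$, cut the curves near it along a self-intersection point, and reassemble the resulting sub-arcs, over the time parameter, into a new family $\widetilde{\gamma}\colon S^1\to\Lambda$ that passes through a constant curve while keeping the length and energy bounds and remaining non-contractible.

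Since $\gamma$ is a sweepout, the associated map $T^2\to S^2$ has degree $\pm1$, so by Proposition \ref{degree formula} the signs of its Type 1 moves sum to $\pm2$; in particular $\gamma$ has a Type 1 move, and the formula moreover controls the signs of these moves, which is what will let us choose the right one. Fix such a move, at time $\tau$, and -- reversing the orientation of $S^1$ if necessary -- assume it is positive. For $t$ in a short interval $(\tau,\tau+\delta]$ we may write $\gamma_t=m_t\cup\ell_t$, where $\ell_t$ is a short loop based at the self-intersection point $p_t$ and bounding a small embedded disc $D_t$, and $m_t$ is the complementary sub-arc (which also runs from $p_t$ to $p_t$); as $t\downarrow\tau$ the loop $\ell_t$ collapses onto $\{p_\tau\}$ and $m_t\to\gamma_\tau$.

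The quantitative point that makes this surgery legitimate is that cutting and re-gluing along self-intersections is cheap: any sub-arc of a $\gamma_t$ which starts and ends at the same point is, after a small smoothing at the junction (at negligible cost to length and energy), a closed curve of length at most $\operatorname{length}(\gamma_t)<L$, and, reparametrized at constant speed, it has energy equal to the square of its length, which by Cauchy--Schwarz is at most $(\operatorname{length}\gamma_t)^2\le E(\gamma_t)<E$. Thus $m_t$, $\ell_t$, and every curve produced in the reassembly lie in the admissible region. With this in hand, the surgery proceeds as follows: near $\tau$ we make the family take a detour, cutting $\gamma_t$ at $p_t$ into $m_t$ and $\ell_t$, contracting $\ell_t$ to a point inside the disc $D_t$ -- this is the moment at which a constant curve enters $\widetilde{\gamma}$ -- and building the rest of $\widetilde{\gamma}$ from the arcs $m_t$ together with the part of $\gamma$ lying away from $\tau$. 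As in \cite{CL}, performing the reassembly consistently may force one to move back and forth in the $t$-parameter, but this does not affect the bounds. The result is a family $\widetilde{\gamma}$ with $\widetilde{\gamma}_0$ a constant curve and all curves of length $<L$ and energy $<E$; see Figure \ref{fig:surgery}.

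The delicate point, which I expect to be the main obstacle, is to guarantee that $\widetilde{\gamma}$ is still a sweepout. Roughly speaking, the surgery trades the Type 1 move at $\tau$, together with its partner in the self-intersection locus, for births and deaths of short loops inside the discs $D_t$, whose signs cancel; tracking the effect on turning numbers through Proposition \ref{degree formula}, one checks that the degree of $\widetilde{\gamma}$ is again $\pm1$. It is here that non-contractibility of $\gamma$ is genuinely used: for a general family the analogous surgery may fail to close up, or may close up to a contractible family -- the phenomenon illustrated by Figure \ref{fig:replicating} -- and what excludes this is the conclusion of Proposition \ref{degree formula} that the signed count of Type 1 moves of $\gamma$ equals $\pm2$, in particular is nonzero, which forces the existence of a Type 1 move along which the surgery can be carried out with the degree preserved.
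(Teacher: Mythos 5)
Your starting points are right and match the paper's: the degree formula guarantees Type~1 moves with signed count $\pm 2$, and cutting a curve at a self-intersection and reparametrizing by constant speed keeps length below $L$ and energy below $\operatorname{length}^2\le E$. But the core of the argument is missing. The step ``contract $\ell_t$ to a point \dots and build the rest of $\widetilde{\gamma}$ from the arcs $m_t$'' does not produce a continuous family: for $\widetilde{\gamma}$ to pass through a constant curve, some parameter value must carry the tiny loop $\ell$ alone, and the neighbouring curves in the family must be close to it in $\Lambda$; the arcs $m_t$ are nowhere near $\ell_t$, and there is no local transition between them. What actually happens is that the self-intersection $p_t$ born at $\tau$ must be \emph{followed} through the whole family until it dies --- possibly being absorbed in Type~2 moves that force the tracking to reverse direction in time --- and the small loop is only liberated when that tracked self-intersection is destroyed by a \emph{second} Type~1 move. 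This is exactly the content of the paper's graph $\Gamma$ of self-intersections, Corollary \ref{col:paths} (Type~1 moves come in pairs $(v,v')$ joined by disjoint paths), and Lemma \ref{lem:contraction}. You cite Figure \ref{fig:replicating} but do not actually circumvent the failure it illustrates: a single chosen Type~1 move, even a positive one, can be paired so that the cutting procedure reproduces the original intersections and never closes up.

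The two ingredients you would need, and which the paper supplies, are: (i) a counting argument that since the signed sum is $\equiv 2 \pmod 4$, some path-connected pair $(v,v')$ consists of two Type~1 moves of the \emph{same} sign; and (ii) the local orientation invariant $L$, which flips at every direction reversal of the path and shows that such a same-sign pair is \emph{good}, i.e.\ the small loop at $v$ is homotopic through subcurves to the complementary arc at $v'$, hence to a full curve of the sweepout, giving a controlled contraction $\alpha$ of that curve to a point. The paper then avoids your degree-preservation worry entirely by setting $\widetilde{\gamma}=(-\alpha)*\gamma*\alpha$ rather than performing a local replacement inside $\gamma$: concatenating with a contraction and its reverse manifestly preserves the degree. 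As written, your proposal asserts the conclusion of these steps (``one checks that the degree of $\widetilde{\gamma}$ is again $\pm 1$'', ``the formula \dots forces the existence of a Type~1 move along which the surgery can be carried out'') without the pairing, the goodness criterion, or a well-defined family, so the proof has a genuine gap at its central step.
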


To prove this theorem, we will require methods from the article \cite{CL} by the two authors.  This method begins with defining a certain graph.
Fix a map $\gamma:S^1 \rightarrow \Lambda$, and choose $t_1, \dots t_{n+1} \in S^1$ such that $t_i \neq t_j$ if $i \neq j$ and $1 \leq i,j \leq n$, exactly one Reidemeister move occurs between each
$t_i$ and $t_{i+1}$ for $i \in \{1, \dots, n \}$, and $t_1 = t_{n+1}$.  Note that at each time $t_i$, $\gamma_{t_i}$ is an immersed curve with transverse self-intersections, each of which consists
of exactly two arcs meeting at a point.

%%%%%%%%%%%%%%%%%%%%%%%%%%%%%%%%%%%%%%%%%%
\subsection{Graph of self-intersections}
%%%%%%%%%%%%%%%%%%%%%%%%%%%%%%%%%%%%%%%%%%

Form the graph $\Gamma$ as follows.  We will first describe how to add vertices, and then describe how to connect them with edges.

\vspace{2mm}

\noindent {\bf{Vertices}}
The vertices of $\Gamma$ will fall into $n+1$ sets $V_1, \dots, V_{n+1}$ and are defined as follows.  First of all, $V_{n+1}$ will simply be a copy of $V_1$.  Next, to construct $V_i$ for $i \in \{ 1, \dots, n \}$, we do the following.  If $\gamma_{t_i}$ is simple, then $V_i$ is empty.  If $\gamma_{t_i}$ is not simple, then every vertex in $V_i$ corresponds to a self-intersection of $\gamma_{t_i}$.

\vspace{2mm}

\noindent {\bf{Edges}}  To add edges to the graph, we do the following.  First, add an edge between each vertex in $V_1$ and each corresponding vertex
in $V_{n+1}$.  Next, for each $i \in \{1, \dots, n \}$, consider the Reidemeister move $R$ in $\gamma$ between $\gamma_{t_i}$ and $\gamma_{t_{i+1}}$.  We add edges on a case-by-case basis:
\begin{enumerate}
	\item	If $R$ is of Type 1, then we have two cases.  The first case is if $V_i$ or $V_{i+1}$ is empty; in this case we do not add any vertices to $\Gamma$.
		The second case is if either $V_i$ or $V_{i+1}$ has more than 1 vertex.  In this case, we do the following.  For every vertex in $V_i$
		that corresponds to a self-intersection $z$ of $\gamma_{t_i}$, either $z$ is deleted by the Reidemeister move $R$, in which case we don't add an edge to $z$, or we can follow $z$ forward to a self-intersection
		$z'$ of $\gamma_{t_{i+1}}$, in which case we join the vertex that corresponds to $z$ to the vertex that corresponds to $z'$ with an edge. 
	\item	If $R$ is of Type 2, then one of two things are true.  One possibility is that we can find two vertices in $V_i$ that correspond to two distinct self-intersection points $x$ and $y$ in $\gamma_{t_i}$
		that are deleted by $R$.  We then join these two vertices by an edge.  For every other vertex in $V_i$, that vertex corresponds to a self-intersection $z$ of $\gamma_{t_i}$.  We can follow this self-intersection
		forward in time to a self-intersection $z'$ of $\gamma_{t_{i+1}}$.  We join the vertex that corresponds to $z$ to the vertex that corresponds to $z'$ with an edge.
		
		The other possibility is that we can find two vertices in $V_{i+1}$ that correspond to self-intersection points $x$ and $y$ that
		were created by $R$.  In this case, we join these two vertices by an edge.   Additionally, for every other vertex in $V_{i+1}$, that vertex corresponds to a self-intersection point $z$ of $\gamma_{t_{i+1}}$.
		There is a self-intersection point $z'$ of $\gamma_{t_i}$ such that $z'$ can be followed forward to $z$.  We join the vertex that corresponds to $z'$ to the vertex that corresponds to $z$ with an edge.
	\item	If $R$ is of Type 3, then every vertex in $V_i$ corresponds to a self-intersection point $z$ of $\gamma_{t_i}$.  This self-intersection point can be followed forward to a self-intersection point $z'$ of
		$\gamma_{t_{i+1}}$.  We join the vertex that corresponds to $z$ to the vertex that corresponds to $z'$ with an edge.
\end{enumerate}

The following lemma characterizes the degree of each vertex of the graph $\Gamma$.

\begin{lemma}
  \label{lem:graph_degree}
  Each vertex in $\Gamma$ has degree $0$, $1$ or $2$.  Furthermore, using the above notation, consider any vertex $v \in V_i$ and the self-intersection $s$
  of $\gamma_{t_i}$ that corresponds to $v$.  Let $x, y \in \{ 0, 1 \}$ be defined as follows.  If $s$ is destroyed in a Type 1 deletion between $t_i$ and $t_{i+1}$, then $x = 1$, otherwise $x = 0$.
  If $s$ is created in a Type 1 creation between $t_{i-1}$ and $t_i$, then $y=1$, otherwise $y = 0$.  We then have that the degree of $v$ is $2 - x - y$.
  In the above statements, if $i = 1$, then $y = 1$, and if $i = n+1$, then $x = 1$.
\end{lemma}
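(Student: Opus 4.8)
The plan is to compute the degree of an arbitrary vertex $v \in V_i$ by partitioning its incident edges into three groups: the ``copy'' edges added in the first step of the construction (present only when $i \in \{1, n+1\}$), the edges added while processing the Reidemeister move between $t_{i-1}$ and $t_i$ (the move \emph{to the left} of $V_i$), and the edges added while processing the move between $t_i$ and $t_{i+1}$ (the move \emph{to the right} of $V_i$). These three steps introduce pairwise disjoint edge sets, so $\deg(v)$ is the sum of the three counts and it suffices to analyze each separately. The copy step is handled at once: it attaches exactly one edge to each vertex of $V_1$ and of $V_{n+1}$, and none to any other vertex.

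The core is a local statement about a single move $R$, say the one between $t_i$ and $t_{i+1}$: \emph{$R$ attaches exactly one edge to each vertex of $V_i$ and to each vertex of $V_{i+1}$, except that a Type~$1$ deletion attaches no edge to the vertex of $V_i$ for the self-intersection it destroys, and a Type~$1$ creation attaches no edge to the vertex of $V_{i+1}$ for the self-intersection it creates.} I would prove this by running through the edge rules move type by move type. For a Type~$3$ move, genericity forces it to be a single isolated triple-point event, so the self-intersections of $\gamma_{t_i}$ and $\gamma_{t_{i+1}}$ are in canonical bijection by ``following forward in time,'' and the rule adds exactly one edge along each matched pair, so every vertex of $V_i \cup V_{i+1}$ receives exactly one edge. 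A Type~$2$ move either destroys a pair of self-intersections (the rest persisting) or creates one; in the deletion case the two destroyed vertices are joined to each other while every surviving vertex of $V_i$ is joined to its forward image, so each vertex on both sides gets exactly one edge, and the creation case is the mirror image. For a Type~$1$ move one first treats the sub-case in which $V_i$ or $V_{i+1}$ is empty --- the curve then has at most one self-intersection near the move, no edge is added, and the only relevant vertex is precisely the created or destroyed one, so the stated exception applies; in the complementary sub-case, ``following forward in time'' is a bijection between $V_i$ and $V_{i+1}$ once the single created or destroyed self-intersection is removed, and the rule adds one edge along each matched pair. Throughout, one must also check that no vertex is ever assigned two edges by the same move, which is the delicate point.

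Granting the local statement, fix $v \in V_i$ corresponding to a self-intersection $s$. If $2 \le i \le n$, then the move to the left of $V_i$ fails to contribute an edge at $v$ exactly when it is a Type~$1$ creation creating $s$, i.e.\ exactly when $y = 1$; the move to the right fails exactly when it is a Type~$1$ deletion destroying $s$, i.e.\ exactly when $x = 1$; and there is no copy edge at $v$. Hence $\deg(v) = (1 - y) + (1 - x) = 2 - x - y$. For $v \in V_1$ or $v \in V_{n+1}$ the copy edge stands in for the absent adjacent move, and I would carefully reconcile its contribution with the conventions $y = 1$ at $i = 1$ and $x = 1$ at $i = n+1$ recorded in the statement; this seam, together with the no-double-counting checks in the local statement, is the place I would scrutinize most, since it is exactly where an off-by-one slip would occur. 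Finally $x, y \in \{0,1\}$ forces $2 - x - y \in \{0,1,2\}$, which gives the first assertion of the lemma. The genuine difficulty is thus combinatorial rather than conceptual: verifying the local statement uniformly over every sub-case of the Type~$1$, Type~$2$, and Type~$3$ rules --- above all that ``follow forward in time'' really is the bijection it must be, which is precisely what genericity of the homotopy supplies --- and handling the closing-up of the circle at $V_1$ and $V_{n+1}$ consistently.
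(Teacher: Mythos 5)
Your proposal is correct and is essentially the paper's own proof: the paper likewise just observes that $v$ receives one edge from the Reidemeister move on each side of $t_i$ except when that move is a Type 1 creation (resp.\ deletion) of $s$, giving degree $(1-y)+(1-x)=2-x-y$, and your case-by-case verification of the ``exactly one edge per move per surviving vertex'' claim is the same counting spelled out in more detail. The seam at $i\in\{1,n+1\}$ that you flag is genuinely delicate, but the off-by-one lives in the paper rather than in your argument: counting the copy edge joining $V_1$ to $V_{n+1}$ gives degree $2-x$ at $i=1$ and $2-y$ at $i=n+1$, which matches the formula $2-x-y$ only under the conventions $y=0$ at $i=1$ and $x=0$ at $i=n+1$, the opposite of what the lemma's statement records (the paper's own proof, which adds an edge whenever $R_1$ or $R_2$ is ``not defined,'' implicitly uses the corrected convention).
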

\begin{proof}
  For each vertex $v \in V_i$, consider the Reidemeister move $R_1$ that occurs between $t_{i-1}$ and $t_i$, and let $R_2$ be the Reidemeister move that occurs between
  $t_i$ and $t_{i+1}$.  Again, if $i = 1$, then $R_1$ is not defined, and if $i = n + 1$, then $R_2$ is not defined.

  From the definition of the edges of $\Gamma$, we see that the edges added to $v$ (with corresponding self-intersection $s$ of $\gamma_{t_i}$) work exactly as follows.
  If $R_1$ is not defined or does not create $s$ in a Type 1 creation, then an edge is added to $\Gamma$ at $v$.  If $R_2$ is not defined or does not destroy $s$ in a Type 1 deletion,
  then a separate edge is added to $\Gamma$ at $v$.  This coincides with the degree computations in the statement of the lemma.
\end{proof}

We can look at the set of all vertices $\mathcal{V}$ such that a vertex $v \in \mathcal{V}$ corresponds to a vertex right after it was created by a Type 1 move,
or just before it is destroyed by a Type 1 move.  A corollary of this lemma is that $\mathcal{V}$ can be decomposed in a particular way.
\begin{corollary}
	\label{col:paths}
	The set $\mathcal{V}$ is equal to the union of a number of disjoint pairs of vertices such that, for each pair $(v, v')$, there is a path in the graph from $v$ to $v'$.  Furthermore,
	the path between a pair $(v,v')$ and the path between a different pair $(w,w')$ are completely disjoint (they do not share any edges).  Note that we may have that $v = v'$
	for a given pair $(v,v')$.
\end{corollary}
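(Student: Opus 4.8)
The plan is to read the corollary as a statement about the graph $\Gamma$ viewed as a $1$-dimensional CW-complex and to analyze it purely combinatorially using Lemma~\ref{lem:graph_degree}. First I would record the key consequence of that lemma: every vertex of $\Gamma$ has degree $0$, $1$, or $2$, so each connected component of $\Gamma$ is either an isolated vertex, a simple path, or a simple cycle. Moreover, a vertex $v$ has degree strictly less than $2$ exactly when the corresponding self-intersection $s$ of $\gamma_{t_i}$ is involved in a Type~1 event — created by a Type~1 move just before time $t_i$, or destroyed by a Type~1 move just after time $t_i$ (with the endpoint conventions $i=1$ forcing $y=1$ and $i=n+1$ forcing $x=1$, which by the construction $V_{n+1}\cong V_1$ means the edge added between $V_1$ and $V_{n+1}$ is exactly the one accounting for these two "missing" half-edges). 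Each such Type~1 event contributes exactly one degree-$1$ endpoint to $\Gamma$ for a creation and exactly one for a deletion; a vertex $v$ with degree $0$ is one that is both created and destroyed by Type~1 moves with no intervening structure, i.e. an isolated component, which corresponds to the degenerate pair $v = v'$ allowed in the statement.

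Next I would make the correspondence between $\mathcal V$ and the endpoints/isolated vertices of $\Gamma$ precise. By definition $\mathcal V$ is the set of vertices $v$ such that $v$ sits right after a Type~1 creation or right before a Type~1 deletion; by the degree formula $\deg v = 2 - x - y$ this is precisely the set of vertices of $\Gamma$ with degree $\le 1$, counted with the correct multiplicity: a vertex with $x = y = 1$ is counted twice (it is simultaneously just-after-a-creation and just-before-a-deletion), which matches its being an isolated component paired with itself, while a vertex with exactly one of $x,y$ equal to $1$ is a genuine endpoint (degree $1$) of a path component. So $\mathcal V$, as a multiset, is exactly the multiset of "loose ends" of the $1$-complex $\Gamma$: two loose ends per path component, two (coincident) loose ends per isolated-vertex component, and none from cycle components.

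With this in hand the corollary is immediate from the classification of connected $1$-manifolds with boundary / graphs of maximum degree $2$. I would enumerate the components of $\Gamma$: each path component $P$ has exactly two endpoints $v, v'$, both of degree $1$, and $P$ itself is a path in $\Gamma$ from $v$ to $v'$; each isolated-vertex component is a single vertex $v$ with $\deg v = 0$, giving the pair $(v,v)$ with the trivial path; and cycle components contribute no vertices to $\mathcal V$ and so are irrelevant. Pairing $v$ with $v'$ within each such component partitions $\mathcal V$ into disjoint pairs, the connecting path is the component itself, and distinct components share no edges, which gives the asserted disjointness. The one point requiring care — and the main thing I would check carefully — is the bookkeeping at the "seam" $t_1 = t_{n+1}$: I must verify that the edge joining each vertex of $V_1$ to its copy in $V_{n+1}$ is correctly counted so that a self-intersection present at time $t_1$ that is neither created nor destroyed by a Type~1 move at the seam contributes a genuine interior edge (degree-$2$ behavior) rather than a spurious endpoint, and that the conventions "$y=1$ if $i=1$" and "$x=1$ if $i=n+1$" in Lemma~\ref{lem:graph_degree} are exactly what make this work out. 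Once that is confirmed, no further argument is needed.
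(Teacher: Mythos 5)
Your argument is correct and is exactly the (implicit) proof the paper intends: the paper states this as an immediate consequence of Lemma~\ref{lem:graph_degree}, and the content is precisely your observation that a graph of maximum degree $2$ decomposes into cycles, simple paths, and isolated vertices, with $\mathcal{V}$ being exactly the multiset of loose ends (path endpoints and isolated vertices, the latter giving the degenerate pairs $v=v'$). Your flagged concern about the seam is the right one to check --- the edge between $V_1$ and $V_{n+1}$ is what keeps a self-intersection persisting through $t_1$ from becoming a spurious endpoint --- and your resolution of it is consistent with how the edges of $\Gamma$ are actually defined.
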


We will use such paths to generate our new homotopy.  
We first require a definition concerning how to cut a curve at a self-intersection.

\begin{definition}
	\label{defn:subcurve}
	Given a smooth curve $\alpha:S^1 \rightarrow S^2$ with isolated self-intersections, we say that a curve $\beta:S^1 \rightarrow S^2$ is a \emph{subcurve} of $\alpha$
	if there is some closed interval $[a,b] \subset S^1$ (possibly with $a = b$) such that $\beta$ is simply $\alpha$ restricted to $[a,b]$.  We have that $\alpha(a) = \alpha(b)$,
	so $\beta$ is all of $\alpha$, is a point, or $\alpha(a)$ is a self-intersection of $\alpha$.
\end{definition}

%and let $P$ denote the path in the graph connecting them.
%Each point on the path $P$ corresponds to 
%a pair $(\gamma_t, s)$, where $s$ is a self-intersection point of $\gamma_t$.
%We can define a map $\sigma: P \rightarrow S^1$ which forgets about the self-intersection point and sends
%a point $x \in P$ to $\sigma(x)=t$ if $x$ corresponds to $\gamma_t$.
%In particular, for vertices $v$ and $v'$ we will have that
%$\gamma_{\sigma(v)}$ and $\gamma_{\sigma(v')}$ are curves containing self-intersections corresponding
%to, respectively, $v$ and $v'$. If $\sigma(v) \neq \sigma(v')$ then $S^1 \setminus \{\sigma(v),\sigma(v') \}$
%consists of two intervals. 
%Note that for all regular values of $\sigma$ on one of these intervals 
%the number of preimages under $\sigma$ will be even. We will say that $\sigma$ has an
%even degree relative to that interval.
%Similarly, we will say that $\sigma$ has an odd degree relative to the second 
%interval, where every regular point must have an odd numebr of preimages.
%Let $I_{v,v'} \subset S^1$ denote the interval, such that $\sigma$ has odd
%degree relative to $I_{v,v'}$. If $\sigma(v) = \sigma(v')$  we set %$I_{v,v'}=\sigma(v)$
%if $\sigma$ has an even degree relative to the complement of $\sigma(v)$ and we set %$I_{v,v'}=S^1 \setminus \{\sigma(v)\}$
%otherwise.

For each pair of self-intersections $(v,v')$ from Corollary \ref*{col:paths}, the self-intersection $s$ which corresponds to $v$ produces two subcurves, $C_{v, 1}$ and
$C_{v, 2}$.  To form $C_{v,1}$, begin at $s$ and follow the loop around according to its orientation
until we get back to $s$.  If we continue along the loop according to its orientation, we will encounter $s$ another time.  This forms the second subcurve $C_{v,2}$.
 Similarly, the self-intersection which corresponds to $v'$ produces two subcurves $C_{v', 1}$ and $C_{v', 2}$.
Since each edge in the graph corresponds to a continuous path between self-intersections, the path between $v$ and $v'$ produces a continuous path between the self-intersection
that corresponds to $v$ and the self-intersection that corresponds to $v'$.  This path then induces
a homotopy from $C_{v, 1}$ to $C_{v', i}$ for some $i \in \{1, 2 \}$, and it induces a homotopy from $C_{v, 2}$ to $C_{v', j}$, where $j \neq i$.
For consistency, assume that $C_{v, 1}$ and $C_{v', 1}$ correspond to the loops that were just created or are about to be destroyed by the appropriate Type 1 moves.  Let these
two homotopies be denoted by $h_{v, v', 1}$ and $h_{v, v', 2}$, respectively.

\begin{definition}

	\label{defn:good_pair}
	Given a pair $(v, v')$ as above, we say that it is \emph{good} if $h_{v, v', 1}$ ends at $C_{v', 2}$ ($h_{v, v', 1}$ starts at $C_{v, 1}$ by definition).
\end{definition}

If a good pair $(v, v')$ exists, we can 
contract any curve in the sweepout to a point through curves of controlled length and energy.

\begin{lemma} \label{lem:contraction}
If the graph $\Gamma$ contains a good pair, then for any 
curve $\gamma_t$ in the sweepout, there is a contraction 
of $\gamma_t$ to a point through curves of length (energy)
less than $L$ (resp. $E$).
\end{lemma}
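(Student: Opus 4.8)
The goal is to produce, from a good pair $(v,v')$, a contraction of any curve $\gamma_t$ to a point through curves of controlled length and energy. The key observation is that a good pair gives us a way to "shrink" a curve by replacing it with one of its subcurves via a homotopy that lives inside the original sweepout (so length and energy bounds are inherited), and this process, iterated, will eventually terminate at a point.

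\medskip

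\noindent\textbf{Step 1: Set up the shrinking move.} Let $(v,v')$ be a good pair, with $v \in V_i$ corresponding to a self-intersection $s$ of $\gamma_{t_i}$. By definition of $\mathcal{V}$ and of the subcurves, $v$ is a vertex that is either just created by a Type 1 move (between $t_{i-1}$ and $t_i$) or about to be destroyed by a Type 1 move (between $t_i$ and $t_{i+1}$). In either case, one of the two subcurves $C_{v,1}, C_{v,2}$ is a \emph{small loop} (the loop involved in the Type 1 move), and the other, call it the complementary subcurve, is obtained from $\gamma_{t_i}$ by cutting off that small loop. Near the Type 1 move, $\gamma_{t_i}$ is homotopic — through curves of length and energy less than $L$ (resp. $E$), by staying near the move and using that creating/destroying a small loop changes length and energy by an arbitrarily small amount — to the complementary subcurve $C_{v,2}$ followed by traversing the tiny loop $C_{v,1}$, and hence (contracting the tiny loop) to $C_{v,2}$ alone.

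\medskip

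\noindent\textbf{Step 2: Use the path in $\Gamma$ to transport and iterate.} The path in the graph from $v$ to $v'$ induces the homotopies $h_{v,v',1}$ and $h_{v,v',2}$. Because $(v,v')$ is good, $h_{v,v',1}$ carries the small loop $C_{v,1}$ to $C_{v',2}$, which means that the complementary subcurve $C_{v,2}$ is carried by $h_{v,v',2}$ to $C_{v',1}$ — the \emph{other} subcurve at $v'$. Now at $v'$: since $v'\in\mathcal{V}$ as well, $v'$ is also adjacent to a Type 1 move, so one of $C_{v',1}, C_{v',2}$ is the small loop of \emph{that} move. The goodness condition is exactly what forces $C_{v',1}$ (the image of our complementary subcurve) to play the role of the small loop at $v'$, so that $\gamma_{t_{i'}}$ (where $v'\in V_{i'}$) is homotopic, through curves of controlled length and energy, to $C_{v',2}$ with a tiny loop attached, i.e. to $C_{v',2}$. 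Chaining: $\gamma_{t_i} \simeq C_{v,2} \simeq C_{v',1} \simeq (\text{a curve with strictly fewer self-intersections})$. More precisely, I would argue that each application of this move either reduces the number of self-intersections of the current curve or the curve is already simple, and repeat. When the curve is simple with no essential content — or directly, once we reach a small enough curve — we contract it to a point through curves of length and energy less than $L$ (resp. $E$), which is possible because a short simple curve bounds a small disc.

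\medskip

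\noindent\textbf{Step 3: Handle an arbitrary $\gamma_t$.} The above contracts $\gamma_{t_i}$ for the specific time $t_i$ attached to the good pair. For a general $\gamma_t$, first move along the sweepout from $\gamma_t$ to $\gamma_{t_i}$; this is a homotopy through curves of the sweepout, hence of length (energy) less than $L$ (resp. $E$), and then apply the contraction of $\gamma_{t_i}$. Concatenating gives the desired contraction of $\gamma_t$.

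\medskip

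\noindent\textbf{Main obstacle.} The delicate point is Step 2: verifying that the goodness hypothesis precisely guarantees the subcurves line up so that at $v'$ the transported complementary subcurve is the loop that gets contracted, and — crucially — that iterating this strictly decreases complexity (e.g. the number of self-intersections) so the process terminates. One must also keep careful track of length and energy at the two ``pinch'' operations (creating/destroying the tiny loops and following the graph path); each of these is either inside the sweepout or a purely local modification that costs an arbitrarily small amount, so the strict inequalities $<L$, $<E$ are preserved, but this bookkeeping needs to be done cleanly. I expect the termination/complexity argument to be where the real work lies; the length and energy control, while it must be stated, follows the same pattern already used in \cite{CL}.
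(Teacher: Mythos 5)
There is a genuine gap, and it sits exactly where you flag your ``main obstacle.'' No iteration or complexity-decreasing argument is needed, and the one you propose cannot be carried out: the paper explicitly exhibits (Figure \ref*{fig:replicating}) homotopies for which repeated application of the cutting procedure never decreases the maximal number of self-intersections, so ``each application reduces the number of self-intersections, hence the process terminates'' is false as a general claim, and you would also need to produce a fresh good pair at every stage, which nothing guarantees. Your proof as written therefore rests on an unproved and in general unprovable termination step.

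The point you miss is that a \emph{single} traversal of the path $P$ already finishes the job, precisely because of the goodness condition. By the paper's convention $C_{v,1}$ and $C_{v',1}$ are the tiny loops involved in the Type 1 moves at $v$ and $v'$; goodness says $h_{v,v',1}$ carries $C_{v,1}$ to $C_{v',2}$, equivalently $h_{v,v',2}$ carries $C_{v,2}$ to $C_{v',1}$. You correctly write the chain $\gamma_{t_i}\simeq C_{v,2}\simeq C_{v',1}$, but then treat $C_{v',1}$ as merely ``a curve with fewer self-intersections'' to be processed further. It is not: it is the small loop created or destroyed by the Type 1 move at $v'$, so it bounds an arbitrarily small disc and contracts to a point through curves of arbitrarily small length and energy. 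Hence $\gamma_{t_i}\simeq C_{v,2}\simeq C_{v',1}\simeq \mathrm{pt}$, with every intermediate curve a subcurve of some curve of $\gamma$ (length $<L$, energy $<E$ after proportional-to-arclength reparametrization) up to attaching or removing tiny loops at the two ends. This is exactly the paper's argument run in the opposite direction (the paper contracts $C_{v,1}$ to a point and expands $C_{v',2}$ to a full sweepout curve via the Type 1 move at $v'$). Your Steps 1 and 3 are fine and match the paper; only the conclusion of Step 2 needs to be replaced by the observation above.
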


\begin{proof}
Observe that we can use the Type 1 destruction or creation 
of a small loop which corresponds to the terminal vertex $v'$ to homotope $C_{v',2}$ to 
a curve in the sweepout $\gamma$. Denote this curve by $\gamma_{t^*}$.
Similarly, we can use the Type 1 move corresponding
to the initial vertex $v$ to homotope the curve $C_{v,1}$ to a point.
The fact that $(v,v')$ is a good pair means that
$C_{v,1}$ and $C_{v',2}$ are homotopic via the homotopy $h_{v, v', 1}$
through subcurves of curves in $\gamma$. 
Hence, $\gamma_{t^*}$ (and every other curve in $\gamma$) is
homotopic to a point through subcurves of curves in $\gamma$.
\end{proof}

We now have that, if there is a good pair $(v,v')$, then our proof of Proposition \ref*{prop:simple_sweepout} is true.  
To see this let the homotopy $\alpha_t$ be a contraction
of a curve in $\gamma$. Define $\widetilde{\gamma}= (-\alpha) * \gamma * \alpha $,
where $- \alpha$ signifies $\alpha$ in the reverse direction.
This family of curves has the same degree as $\gamma$ and starts at a constant curve.
By Lemma \ref*{lem:contraction} we can choose $\alpha$ so 
that the lengths of curves are less than $L$ and energy less
than $E$.
The next subsection proves the existence of such a good pair for
any sweepout $\gamma$.

If $(v, v')$ is not a good pair, then the ``cutting" procedure as above produces two homotopies, at least one of which is nontrivial.
One is tempted to apply both of these cutting procedures
repeatedly in the hope of constructing
a homotopy with the desired properties.
However, even for a simple case of homotopies of curves
with at most $2$ self-intersections it may happen
that the maximal number of self-intersections of curves in the new homotopy
does not decrease, no matter how many times we apply the cutting
procedure. In fact, there are situations in which a portion of the homotopy
is replicated every time we apply the cutting procedure.
This is illustrated in Figure \ref*{fig:replicating}.

\begin{figure}
   \centering   
    \includegraphics[scale=0.35]{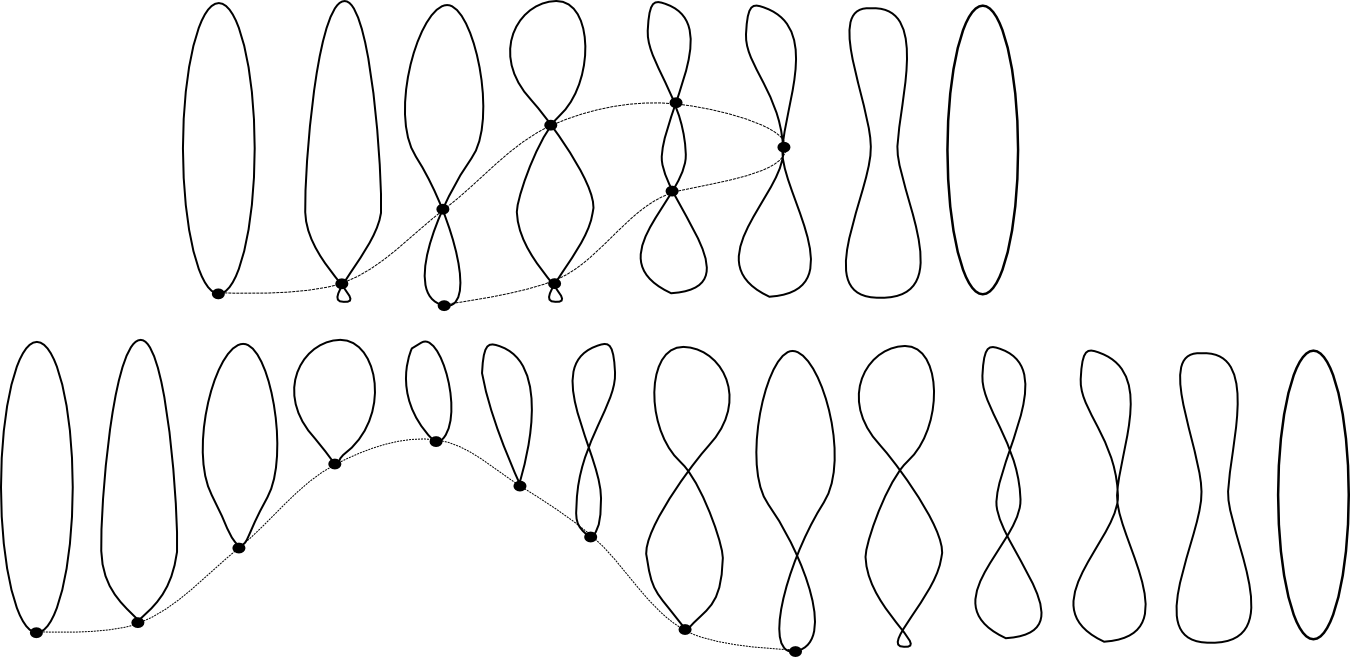}
    \caption{We cannot remove self-intersections by repeatedly applying the cutting procedure} \label{fig:replicating}
\end{figure}

The top picture describes the original homotopy.
After we apply the cutting procedure we obtain two new homotopies.
One of these homotopies will start at a point and end at a point,
but it may happen that it has degree $0$.
Then we have to consider the other homotopy,
which is shown on the bottom of Figure \ref*{fig:replicating}.
This homotopy has two Type 1 creations exactly like
the original homotopy, so we cannot get rid of these
intersections by applying the cutting procedure again.

%%%%%%%%%%%%%%%%%%%%%%%%%%%%%%%%%%%%%%%%%%%%%%%%%%%%%
\subsection{Existence of a good pair $(v, v')$}
%%%%%%%%%%%%%%%%%%%%%%%%%%%%%%%%%%%%%%%%%%%%%%%%%%%%%

Assigning to each Type 1 move a sign as we did when defining the degree formula 
in Proposition \ref*{degree formula} (using some appropriate $x \in S^2$), we see that since the degree of $\gamma$ is odd, 
the sum of all of the signs of all of the Type 1 moves is $2~mod~4$.  
Hence, we can find a pair
$(v, v')$ such that the sign of the Type 1 move associated to $v$ is positive, as is the sign
of the Type 1 move associated with $v'$. Note that if $v = v'$, then there is some ambiguity as to which Type 1 moves we are referring to.
In this case, there is a self-intersection that is created by a positive Type 1 move,
and then which is immediately destroyed by a positive Type 1 move.  In this case, clearly $(v,v')$ is a good pair, and these are the two Type 1 moves
to which we are referring.  For the remainder of this section, we may thus assume that $v \neq v'$.  We then have the following.

\begin{lemma}
If $v$ and $v'$ correspond to Type 1 moves
of the same sign then the pair $(v, v')$ is a good pair.
\end{lemma}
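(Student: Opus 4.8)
The plan is to analyze what the two homotopies $h_{v,v',1}$ and $h_{v,v',2}$ do to the turning number, and to use the sign information to pin down the endpoints. Recall that the self-intersection $s$ at $v$ is created by a \emph{positive} Type 1 move, so the subcurve $C_{v,1}$ is a small loop whose turning number (after stereographic projection away from a point $p$ disjoint from the relevant curves) we can take to be $+1$ with the chosen orientation conventions; the complementary subcurve $C_{v,2}$ then carries the remaining turning number of $\gamma_{t_i}$. Likewise, since $s'$ at $v'$ is destroyed by a \emph{positive} Type 1 move, $C_{v',1}$ is the small loop of turning number $+1$ that is about to disappear, and $C_{v',2}$ carries the rest. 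The key point is that the homotopies $h_{v,v',1}$ and $h_{v,v',2}$ are each built from following the path in $\Gamma$ between $v$ and $v'$, and along such a path the self-intersection is tracked continuously; at each Reidemeister move along the path, the subcurves get re-cut, but by Whitney's theorem (as used in Proposition \ref{degree formula}) the turning number of each subcurve, projected appropriately, is a regular-homotopy invariant along the path, changing only at Type 1 moves \emph{on that subcurve} — and no Type 1 move occurs on these subcurves in the interior of the path, since the path in $\Gamma$ by construction only connects $v$ to $v'$ without passing through the creation/destruction events that define $\mathcal V$ (those are exactly the endpoints). So turning number is preserved along both $h_{v,v',1}$ and $h_{v,v',2}$.

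First I would make precise the turning-number bookkeeping: fix a point $p\in S^2$ avoided by all curves involved in the path (such a $p$ exists since the swept-out set has measure zero / is $1$-dimensional, as in Proposition \ref{degree formula}), project everything via $St_p$, and record $T(C_{v,1})$, $T(C_{v,2})$, $T(C_{v',1})$, $T(C_{v',2})$. From the positivity of the Type 1 move at $v$, normalize so that $T(C_{v,1}) = 1$ (the newly created small loop) and $T(C_{v,2}) = T(\gamma_{t_i}) - 1$; from positivity at $v'$, $T(C_{v',1}) = 1$ and $T(C_{v',2}) = T(\gamma_{t_{i'}}) - 1$. Second, I would invoke the invariance of turning number along the path to conclude $T(\text{image of } C_{v,1} \text{ under } h_{v,v',1}) = 1$ and $T(\text{image of } C_{v,2} \text{ under } h_{v,v',2}) = T(C_{v,2})$. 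Third, since $h_{v,v',1}$ ends at one of $C_{v',1}$ or $C_{v',2}$, and $h_{v,v',2}$ ends at the other, and these two terminal curves have turning numbers $1$ and $T(\gamma_{t_{i'}})-1$ respectively — which are unequal unless $T(\gamma_{t_{i'}}) = 2$ — the matching is forced: $h_{v,v',1}$ must end at the curve of turning number $1$, namely $C_{v',1}$. But wait — that would say the pair is \emph{not} good, contradicting the claim, so I must have the normalization of the "good" convention reversed. Let me re-examine: Definition \ref{defn:good_pair} says $(v,v')$ is good if $h_{v,v',1}$ ends at $C_{v',2}$, where by the stated convention $C_{v,1}$ and $C_{v',1}$ are \emph{both} the loops that are created/about-to-be-destroyed. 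Here is the resolution: the homotopy $h_{v,v',1}$ starts at $C_{v,1}$, the small positively-oriented loop being created at $v$; but when we transport this loop along the path to $v'$, the \emph{role reversal} occurs because at $v'$ we are looking at a loop about to be \emph{destroyed}, and the orientation/turning-number sign of "the small loop about to be destroyed by a positive move" relative to the transported loop comes out opposite — so the transported $C_{v,1}$ matches $C_{v',2}$ in turning number, not $C_{v',1}$. The careful sign computation, using that a positive creation and a positive destruction contribute turning-number increments of opposite effect when read in the forward-time direction of $h_{v,v',1}$, is exactly what makes $T(h_{v,v',1}(\text{end})) = T(C_{v',2})$ and hence identifies the endpoint as $C_{v',2}$.

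The main obstacle, then, is getting this last sign bookkeeping exactly right: one must carefully track how the orientation convention "$C_{\cdot,1}$ is the small loop" interacts with the direction in which the path is traversed and with the fact that a \emph{creation} move at $v$ versus a \emph{destruction} move at $v'$ reverse the relationship between "small loop" and its turning-number contribution. Concretely, I would set up the turning number of the transported loop as a function along the path, show it is locally constant (Whitney), and then compare its value at the $v'$ end against $T(C_{v',1}) = +1$ and $T(C_{v',2}) = T(\gamma_{t_{i'}}) - 1$; the positivity hypothesis at \emph{both} ends forces the transported value to equal $T(C_{v',2})$ (after the sign reversal coming from creation-vs-destruction), which by the definition is precisely the statement that $(v,v')$ is good. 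The only genuine subtlety beyond sign-chasing is ensuring the turning numbers $+1$ and $T(\gamma_{t_{i'}})-1$ are actually distinguishable, i.e. handling the degenerate case $T(\gamma_{t_{i'}}) = 2$; in that case both endpoints have turning number $1$ and one must fall back on a direct orientation comparison of the two loops rather than their turning numbers, which is a finite check.
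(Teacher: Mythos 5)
There is a genuine gap, and it sits exactly where the real work of the lemma lives. Your central claim is that the turning number of each tracked subcurve is a locally constant function along the path $P$, so that the endpoint of $h_{v,v',1}$ can be identified by comparing turning numbers. But the subcurves obtained by cutting at a self-intersection are closed curves with a \emph{corner} at the cut point, and their ``turning number'' is only defined once you fix a convention for the exterior angle there. More importantly, at every Type 2 move where $P$ changes direction, the tracked self-intersection is exchanged for a different self-intersection: the decomposition into subcurves changes (the two cuts differ by which side of the vanishing bigon they retain), and the local corner data flips. This is precisely the phenomenon the paper's proof isolates with its local orientation invariant $L$ --- defined from the tangent vector of the subcurve and the adjacent region at the corner --- and the whole argument there is that $L$ changes sign at each direction change of $P$, after which a parity count of direction changes, combined with whether the moves at $v$ and $v'$ are creations or destructions, forces the conclusion. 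Your argument has no mechanism that detects these flips; asserting Whitney invariance ``along the path'' skips over them.

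Two further points make the turning-number route unworkable as written. First, your claim that ``no Type 1 move occurs on these subcurves in the interior of the path'' is false: the path only avoids Type 1 moves \emph{at the tracked self-intersection} (such a move would terminate the path), but the ambient curve can undergo Type 1 moves elsewhere during the time span covered by $P$, each of which changes the turning number of one of the two subcurves by $\pm 1$; likewise a point $p$ avoided by every curve in the path need not exist (the homotopy can cover the sphere), and each passage through $p$ shifts projected turning numbers by $\pm 2$. So the quantities $T(C_{v,2})$ and $T(C_{v',2})$ are not related in the way you assume. Second, you yourself flag that your first-pass computation yields the wrong endpoint and that the fix is a ``sign reversal coming from creation-vs-destruction'' that you do not carry out, and that in the degenerate case $T(\gamma_{t_{i'}})=2$ the two candidate endpoints are indistinguishable by turning number and one must fall back on ``a direct orientation comparison.'' That direct local orientation comparison, propagated correctly through the direction changes of $P$, is not a fallback for a corner case --- it is the entire proof, and it is what is missing here.
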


\begin{proof}
Note that a positive Type 1 move can be either
a creation of a small positively oriented loop
or a destruction of a small negatively oriented loop.
Each point on the graph $\Gamma$ corresponds to
a transverse self-intersection point of $\gamma_t$ for some $t$.
By cutting $\gamma_t$ at the self-intersection point
we obtain two connected curves $\gamma_t^A$ and $\gamma_t^B$.
To each point on the graph and a choice of $\gamma_t^A$ or $\gamma_t^B$,
we will associate a binary invariant $L$ which we will call the local orientation.
This invariant is based only on local data in the neighborhood of
the self-intersection.
We will show that the invariant changes sign every time 
the path in the graph between $v$ and $v'$ changes direction.

Let $p$ be a self-intersection point of $\gamma_t$.
Without any loss of generality we may assume that
two arcs intersect at $p$ perpendicularly.
We cut $\gamma_t$ at $p$ and smooth out the intersection in such a way that we obtain
two connected curves that inherit their orientations from
$\gamma_t$. Let $Q$ be a small disc in the neighborhood of $p$.
After smoothing, the curve separates $D$ into three connected components
(see Figure \ref*{invariant}). Let $A$ and $B$ denote two components
that do not share a boundary and let 
$\gamma_t^A$ and $\gamma_t^B$ denote curves adjacent to 
$A$ and $B$ correspondingly. Let $v_1$ be a tangent vector of
$\gamma_t^A$ at $p$ (solid line on Figure \ref*{invariant}) and
let $v_2$ be a vector
pointing from $p$ to a point on $\partial A \cap \partial D$
(dashed line on Figure \ref*{invariant}).
We define $L(\gamma_t^A) = 1$ if the ordered pair $(v_1,v_2)$
is positively oriented and $L(\gamma_t^A) = -1$ otherwise.  We
define $L(\gamma_t^B)$ in the same manner.
Observe that $L(\gamma_t^A) = - L(\gamma_t^B)$.

\begin{figure}
   \centering	
	\includegraphics[scale=0.9]{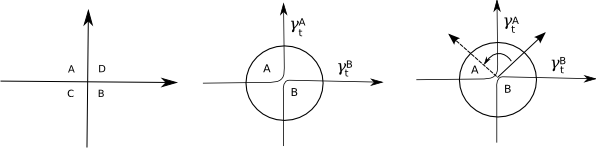}
	\caption{Definition of $L(\gamma_t^A)$}  \label{invariant}
\end{figure}

%We perform cutting of $\gamma_t$ into two components
%along the path $P$ starting from one of the terminal vertices
%(corresponding to a positive Type 1 move).
%By Theorem [Graph] as we go along path $P$
%we can make a continuous choice of a subcurve 
%from the pair obtained after cutting.
%We start from the loop created at a terminal vertex
%and consider how it continuously evolves as we go along $P$,
%denote the homotopy obtained this way by $\gamma$.
%When we reach the other terminal vertex
%there are two possibilities:
%eigther $\gamma_t$ contracts to a point
%or it becomes the entire curve $\gamma_t$.
%We claim that if $P$ is a positive path then the 
%first possibility does not occur.

For each point $q$ in the path $P$ between $v$ and $v'$, let
$L_q$ denote $L(\gamma_t^A)$, where $\gamma_t^A$ corresponds
to the appropriate curve in $h_{v, v', 1}$.  

We observe that $L_q$ 
does not change when $\gamma$ undergoes a Type 3 move
or a move that does not involve the self-intersection that we follow to form the path $P$. 
Consider a segment of the path $P$ corresponding
to a part of the homotopy where $\gamma_t$ goes through
a Type 2 move involving the self-intersection that we follow to form $P$.
On the graph $\Gamma$, this looks like a change of direction
of the path $P$ (see Figure \ref*{path}).
Let $q_1$ be a point on the path just before the change of direction 
and let $q_2$ be a point just after it.
We claim that $L_{q_1} = - L_{q_2}$.
This follows by considering Figure \ref*{type2}.

\begin{figure}
   \centering	
	\includegraphics[scale=0.25]{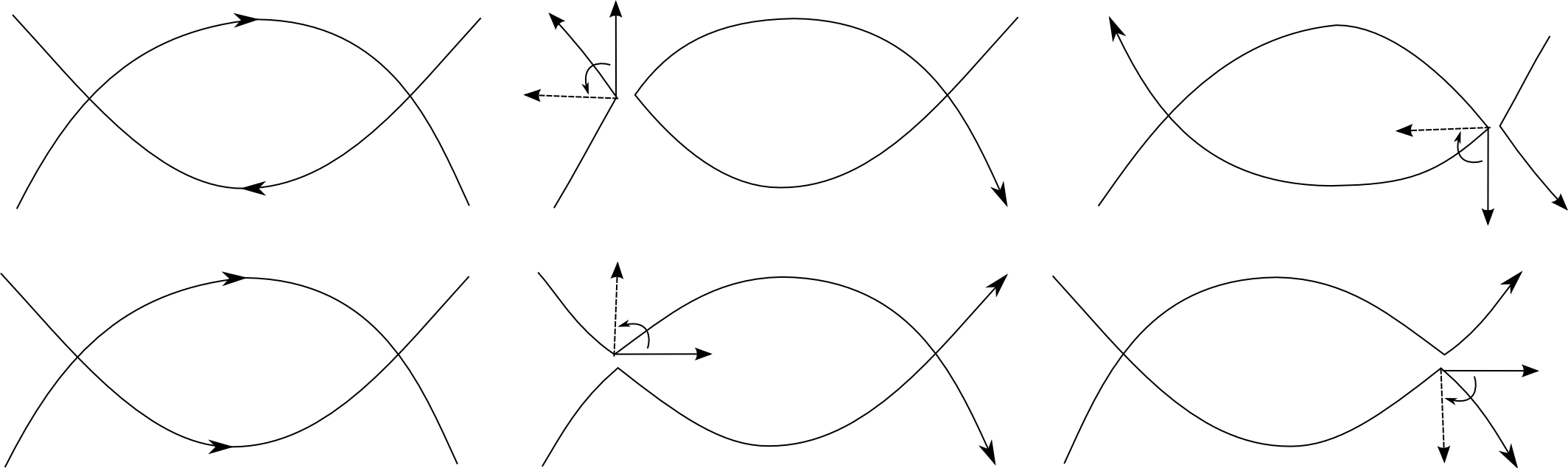}
	\caption{$L_q$ changes sign under Type 2 moves}  \label{type2}
\end{figure}

With this result we can now prove the lemma. Suppose $(v,v')$ is not a good pair.

Consider two cases. Suppose first that the path $P$
changes direction an odd number of times as in
Figure \ref*{path}(a).  It follows that the Type 1 move corresponding to $v$ and
the Type 1 move corresponding to $v'$ are either both creations, or are both destructions.
In either case, the orientation of the loop that is being created or destroyed 
at $v$ is different than the orientation of the loop that is being created or destroyed at $v'$
since $L$ changes sign from the beginning of $P$ to the end of $P$ and $(v,v')$ is not a good pair.
Thus, the sign of the Type 1 move associated with $v$ is opposite to the sign of the Type 1 move
associated with $v'$, which is a contradiction.

\begin{figure}
   \centering	
	\includegraphics[scale=1]{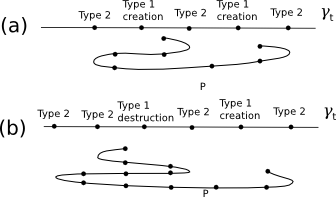}
	\caption{Path $P$ may change direction an even or odd number of times} \label{path}
\end{figure}

Suppose $P$
changes direction an even number of times as in
Figure \ref*{path}(b). Then either the Type 1 move associated with $v$ is a creation and 
the Type 1 move associated with $v'$ is a destruction, or the move associated with $v$ is a destruction
and the move associated with $v'$ is a creation.  In either case, the orientation of the small loop
that is created or destroyed at $v$ is the same as the orientation of the small loop that is being created or destroyed
at $v'$, since the $L$ invariant has the same value at the beginning
and at the end of $P$, and since $(v,v')$ is not a good pair.  This implies that the sign of the Type 1 move at $v$
is opposite to that of the Type 1 move at $v'$, which is a contradiction.  

\end{proof}

As described above, this implies that Proposition \ref*{prop:simple_sweepout} is true.

%%%%%%%%%%%%%%%%%%%%%%%%%%%%%%%%%%%%%%%%%%%%%%%%%
\section{Proof of Theorem \ref*{thm:simple_sweepout}}
%%%%%%%%%%%%%%%%%%%%%%%%%%%%%%%%%%%%%%%%%%%%%%%%%
Given a sweepout $\gamma$, we apply Proposition \ref*{prop:simple_sweepout} to it
to obtain a sweepout $\widetilde{\gamma}$ that starts and ends on a constant 
curve. For simplicity we will think of $\widetilde{\gamma}$
as a family of curves defined on $[0,1]$ with
$\widetilde{\gamma}_0=\widetilde{\gamma}_1$ being the constant curve.
It follows from the construction that for all $t$ sufficiently close
to the endpoints $\widetilde{\gamma}_t$ is a simple closed curve, and
$\widetilde{\gamma}_t$ is not constant for $t \in (0,1)$.

Our goal now is to modify $\widetilde{\gamma}$ 
so that it consists of curves without self-intersections, satisfies the
desired length and energy bounds and has an odd degree (in fact, we will show that 
the new family has degree $\pm 1$).
To do this we use the results of \cite{CL}.
Let $t_1=\delta$ and $t_2=1-\delta$ be two points near the endpoints of $[0,1]$ so that  $\widetilde{\gamma}_{t_1}$ and
$\widetilde{\gamma}_{t_2}$
are simple closed curves each bounding a small closed disc,
denoted by $D_1$ and $D_2$
respectively. We perturb 
the homotopy slightly so that discs $D_1$ and $D_2$ are disjoint.
If $\delta$ is sufficiently small we can always do this so that
the length (energy) of curves never exceed $L$ (resp. $E$).

%We can choose points $t_1, t_2 \in S^1$ near $a$ and $b$ respectively, such that $t_1 \neq t_2$
%and $[a,b] \cap [t_1, t_2] = \emptyset$
%.  Furthermore, we can choose $t_1$ and $t_2$ 
%and so that $\widetilde{\gamma}_{t_1}$ and 
%$\widetilde{\gamma}_{t_2}$ are both simple curves which can be contracted to $\widetilde{\gamma_a}$ and $\widetilde{\gamma_b}$ (respectively)
%through simple curves over curves of length at most $\epsilon$.
We can now apply Theorem $1.1'$ from \cite{CL} to the homotopy $\widetilde{\gamma}[t_1, t_2]$.  This produces an isotopy
$\alpha$ which starts at $\widetilde{\gamma}_{t_1}$ and ends at $\pm \widetilde{\gamma}_{t_2}$,
where $-\widetilde{\gamma}_{t_2}$ denotes $\widetilde{\gamma}_{t_2}$ with the opposite orientation.  
Curves $\alpha_0$ and $\alpha_1$ are simple curves of some very small energy.
We can contract each of them to a point in the corresponding small disc $D_i$
through short simple closed curves. Hence, we can turn $\alpha$ into a map
$\beta: S^2 \rightarrow S^2$.

%which is constant only on a closed interval of $S^1$ by contracting both $\alpha_{t_1}$ and $\alpha_{t_2}$ as in $\widetilde{\gamma}$, and then by joining the resulting constant curves together through an arc of constant curves.

%We have that the degree of $\beta$ must be odd.  This is due to the following lemma.

\begin{lemma} \label{lem:deg_1}
Map $\beta$ has degree $\pm 1$.
\end{lemma}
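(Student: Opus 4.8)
The strategy is to relate the degree of $\beta$ to the degree of $\widetilde{\gamma}$ via Corollary \ref{cor:deg_0}, using the fact that $\widetilde{\gamma}$ has odd degree (hence degree $\pm 1$, being a sweepout). First I would set up the comparison: the isotopy $\alpha$ produced by Theorem $1.1'$ agrees, at its endpoints, with $\widetilde{\gamma}_{t_1}$ and $\pm\widetilde{\gamma}_{t_2}$, and outside $[t_1,t_2]$ we have capped off $\alpha$ by contracting $\alpha_0$ and $\alpha_1$ inside the disjoint discs $D_1$ and $D_2$ to obtain $\beta:S^2\to S^2$. On the other hand, $\widetilde{\gamma}$ itself is (after a small generic perturbation on $[0,t_1]\cup[t_2,1]$, where it passes through a constant curve and small simple loops) exactly a family of the type to which Corollary \ref{cor:deg_0} applies, with $I=[t_2,1]\cup[0,t_1]$ (the part near the constant endpoint) and $S^1\setminus I$ playing the role of $(t_1,t_2)$.

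The key step is to observe that $\beta$ and $\widetilde{\gamma}$ "differ" only by replacing the homotopy $\widetilde{\gamma}[t_1,t_2]$ with the isotopy $\alpha$ (and then by how the two ends are capped off). Since $\alpha$ is homotopic to $\widetilde{\gamma}[t_1,t_2]$ rel endpoints through curves on $S^2$ (this is guaranteed by the construction in \cite{CL}: all the redrawings and moves take place within the relevant family, so the path of redrawings is a homotopy in $\Lambda$ with the same endpoints up to orientation), the closed-up families have the same degree, up to the orientation ambiguity at $t_2$. Concretely: if $\alpha$ ends at $\widetilde{\gamma}_{t_2}$, then $\beta$ is homotopic to the closed-up version of $\widetilde\gamma$, which has degree $\pm 1$; if $\alpha$ ends at $-\widetilde{\gamma}_{t_2}$, then the orientation at the endpoint $t_2$ of the "free" interval is reversed relative to $\widetilde\gamma$, but since the two ends of $\beta$ are capped off inside $D_1$ and $D_2$ by contracting simple curves to points — a degree-$0$ contribution over a set of arbitrarily small measure — the degree of $\beta$ is computed by counting signed preimages of a generic point $x\notin D_1\cup D_2$ under $\alpha|_{[t_1,t_2]}$, exactly as in the proof of Corollary \ref{cor:deg_0}. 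I would argue that this count is $\pm 1$ by transferring it back to the corresponding count for $\widetilde\gamma|_{(t_1,t_2)}$, which is $\pm1$ because $\deg\widetilde\gamma=\pm1$ and the capping of $\widetilde\gamma$ near its constant endpoint contributes $0$ to the count of preimages of such an $x$.

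The main obstacle — and the place requiring the most care — is the orientation bookkeeping in the case where Theorem $1.1'$ returns an isotopy from $\widetilde\gamma_{t_1}$ to $-\widetilde\gamma_{t_2}$ rather than to $\widetilde\gamma_{t_2}$. One must check that reversing the orientation of the terminal simple curve, followed by contracting it in $D_2$, does not change the parity of the degree: reversing orientation of a simple closed curve bounding $D_2$ swaps which disc it "bounds with the induced orientation," which is precisely the distinction between "same orientation at the endpoints" and "different orientations at the endpoints" in Corollary \ref{cor:deg_0}. So I must verify that whichever of the two cases of Theorem $1.1'$ occurs, the resulting $\beta$ falls in the "different orientations" case of that corollary (equivalently, degree $\pm1$ and not $0$) — and this is forced because $\widetilde\gamma$ already has odd degree, so $\widetilde\gamma$ is in the "different orientations" case, and passing to $\alpha$ with either orientation at $t_2$ together with the appropriate capping preserves this. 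Once the orientation signs are tracked correctly, the degree computation is the same signed-preimage count as in Corollary \ref{cor:deg_0}, yielding $\deg\beta=\pm1$.
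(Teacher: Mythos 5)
Your overall frame --- compare $\beta$ with $\widetilde{\gamma}$ and finish with Corollary \ref{cor:deg_0} --- is the same as the paper's, but the step carrying all the weight is not justified and is false as stated. You claim that $\alpha$ is homotopic rel endpoints to $\widetilde{\gamma}[t_1,t_2]$ through curves on $S^2$, so that the closed-up families have the same degree. The construction in \cite{CL} provides no such homotopy: $\alpha_t$ is a \emph{redrawing} of a curve of $\widetilde{\gamma}$, i.e.\ it traverses the same arcs but reglued at self-intersections, with some arcs possibly reversed. Consequently, for a generic point $x$ the preimages of $x$ under $\alpha$ and under $\widetilde{\gamma}$ coincide as sets of parameter values, but the local signs $d_x(i)$ need not agree, so the signed counts (hence the degrees of the closed-up maps) can differ by an even integer; only the parity survives. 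This parity statement is precisely the paper's argument: after reparametrizing $\widetilde{\gamma}$ (running back and forth in $t$) so that $\alpha_t$ is a redrawing of $\widetilde{\gamma}_t$ for every $t$, the \emph{unsigned} number of preimages of a generic $x$ is the same for both families; since the signed count for $\widetilde{\gamma}$ is $1$, that number is odd, hence the signed count for $\alpha$, i.e.\ $\deg\beta$, is odd. Corollary \ref{cor:deg_0} then restricts the degree of a family of simple and constant curves to $0$ or $\pm1$, and oddness eliminates $0$.

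Your fallback for ruling out degree $0$ --- that $\widetilde{\gamma}$ is in the ``different orientations'' case and that this is ``preserved'' by passing to $\alpha$ and capping --- does not close the gap. The endpoint-orientation dichotomy of Corollary \ref{cor:deg_0} determines the degree only for families that are simple away from the constant part; $\widetilde{\gamma}$ is not such a family, so its odd degree is not encoded by an endpoint-orientation condition that could be transferred to $\alpha$. Moreover, Theorem $1.1'$ may return an isotopy ending at $-\widetilde{\gamma}_{t_2}$, which flips exactly the orientation your bookkeeping relies on; you flag this ambiguity but resolve it only by assertion. The missing ingredient is the mod-$2$ invariance of the preimage count under redrawing, which is what actually forces $\deg\beta$ to be odd.
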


\begin{proof}

First, we show that the degree of $\beta$ must be odd.

Let $x \in S^2$ be a regular point of $\widetilde{\gamma}$, which does not intersect 
any of the curves $\widetilde{\gamma}_{t}$ for $t$ 
in one of the small intervals $[0,t_1]$ and $[t_2,1]$,
and which also does not lie on any of the self-intersection
points of curves in $\widetilde{\gamma}$. By selecting $t_1$ and $t_2$
sufficiently close to $0$ and $1$, we can ensure that the set of such points has
nearly full measure in $S^2$.
In addition we require that $x$ does not lie on any curve of $\widetilde{\gamma}[t_1, t_2]$ at which a Reidemeister move
occurs, that is, any curve
which has a singularity, a tangential self-intersection or an intersection which involves three arcs.
There will be only finitely many such curves in the homotopy (see Subsection
\ref*{sec:generic}).

%	Given a homotopy $\alpha:[0,1] \times S^1 \rightarrow S^2$ such that $\alpha_0$ and $\alpha_1$ are simple, and given that $\widetilde{\alpha}$ is the isotopy
%	produced by Theorem $1.1'$ from \cite{CL}, we have that the following is true.  If $x \in S^2$ is a 
%	regular value of $\alpha$ which does not lie on the initial or final curves, nor
%on a self-intersection point of any curve, and the sum of the signed preimages of $x$ with
%respect to $\alpha$ is odd
%then the sum of signed preimages of $\widetilde{\alpha}$ with respect to $\alpha$ is $\pm 1$.

%	Given a homotopy $\alpha:[0,1] \times S^1 \rightarrow S^2$ such that $\alpha_0$ and $\alpha_1$ are simple, and given that $\widetilde{\alpha}$ is the isotopy
%	produced by Theorem $1.1'$ from \cite{CL}, we have that the following is true.  If $x \in S^2$ is a 
%	regular value of $\alpha$ which does not lie on the initial or final curves, nor
%on a self-intersection point of any curve, and the sum of the signed preimages of $x$ %with
%respect to $\alpha$ is odd
%then the sum of signed preimages of $\widetilde{\alpha}$ with respect to $\alpha$ is $\pm 1$.

For each $t \in [t_1, t_2]$, the curve $\alpha_t$
is a redrawing of some curve $\widetilde{\gamma}_{t'}$, where $t' \in [t_1,t_2]$, 
but may differ from $t$.
We can modify the original homotopy $\widetilde{\gamma}_{t}$ so that
$\alpha_t$ is a redrawing of $\widetilde{\gamma}_{t}$ for each $t$.
This is done simply by making the homotopy move back and forth
through the same curves multiple times for certain subintervals of the parameter space.
This does not change the sum of the signs of the signed preimages of $x$ with respect to $\widetilde{\gamma}$, and does not
affect the initial and final curves. 
Since the total number of preimages of $x$ under $\widetilde{\gamma}$ is
odd (as the sum of the signed preimages is $1$), the same must be true about $\alpha$, and so the sum of all of the signed preimages of $\alpha$ must be odd as well.
Since $\beta$ consists of constant curves and simple closed curves,
by Corollary \ref*{cor:deg_0}, $\beta$ has degree $\pm 1$.

%Let $x \in S^2$ be a regular point with respect to $\alpha$.  Clearly, it is also regular with respect to $\widetilde{\alpha}$,
%since it is not on $\alpha_0$, $\alpha_1$, or is the self-intersection point of an curve in $\alpha$.  
%If the sum of the signed preimages of $x$ with respect to $\alpha$ is odd, then both $\alpha$ and $\widetilde{\alpha}$ must pass through $x$ an odd number of times.
%Hence, the sum of the signed preimages of $x$ with respect to $\widetilde{\alpha}$ is also odd.  If the sum of the signed preimages of $x$ with respect to $\alpha$
%is even, then by the same logic, we also have that the sum of the signed preimages of $x$ with respect to $\widetilde{\alpha}$ is even, which completes the proof.
\end{proof}

We now have that $\alpha: [0,1] \rightarrow S^2$ consists
of simple curves (except for $\alpha_0$ and $\alpha_1$ which are 
constant), but different curves in $\alpha$ may intersect.

\section{Applications to converting homotopies to isotopies in an effective way}
%%%%%%%%%%%%%%%%%%%%%%%%%%%%%%%%%%%%%%%%%%%%%%%%%%%%%%%%%%%

In this section we use the methods of this paper to prove
Theorem \ref*{thm:isotopy}. This result was conjectured to be true
by the authors in (\cite{CL}, remark in the end of Section 3).

We wish to show that if two simple curves $\gamma_0$ and $\gamma_1$ on a Riemannian
manifold $(M,g)$ are homotopic through curves of length less than $L$, then

\begin{enumerate}
	\item	$\gamma_0$ and $\gamma_1$ are homotopic through simple curves of length less than $L$, or
	\item	$\gamma_0$ and $\gamma_1$ are each contractible through simple closed curves of length less than $L$.  Here, by simple curves, we
		mean that all curves except for the final constant curve are simple.
\end{enumerate}

We break the proof into two cases. If $\gamma_0$ is non-contractible, then since $\gamma_1$ is homotopic to $\gamma_0$, it is also non-contractible.
In \cite{CL} it was shown that $\gamma_0$ and $\gamma_1$ are isotopic
through curves of length less than $L$.

The case that we deal with now is if $\gamma_0$ (and $\gamma_1$) are contractible.  We can assume that the images of $\gamma_0$ and $\gamma_1$ do not coincide by perturbing
our homotopy slightly.  If we can show that the result holds for the perturbed curves, then we can perturb the curves back to the original curves, proving the theorem.

Let $M$ be any surface and consider the universal cover $\widetilde{M}$ of $M$.  The universal cover is either $S^2$ or a contractible subset of $\mathbb{R}^2$.
Since the curves $\gamma_0$ and $\gamma_1$ are contractible we can lift the homotopy between them
to a homotopy in the universal cover.
%Since all curves in the homotopy are simple, the inverse image of each curve
%under the deck transformation is a set of simple
%closed curves in $\widetilde{M}$.  Due to the characterization of $\widetilde{M}$ as above, the previous methods for $S^2$ and $\mathbb{R}^2$ can be employed to
%show that the theorem holds for $M$ as well.
Hence, without any loss of generality, we may assume that
$M$ is either a plane or a sphere.

Suppose $M$ is a contractible subset of $\mathbb{R}^2$.  In this case, we can define the turning number $\mathcal{T}_0$ of $\gamma_0$ and the turning number $\mathcal{T}_1$ of
$\gamma_1$ as before.  Since both $\gamma_0$ and $\gamma_1$ are simple, $\mathcal{T}_0 = \pm 1$, and $\mathcal{T}_1 = \pm 1$.  We can apply the procedure described in \cite{CL}
to produce a homotopy $H: [0,1] \times S^1 \rightarrow \mathbb{R}^2$ through simple curves of length less than $L$
from $\gamma_0$ to $\pm \gamma_1$.  Since this map goes through simple closed curves,
the turning number of $\gamma_0$ and the turning number of $H(1 \times S^1)$ agree.  Hence, if $\mathcal{T}_0 = \mathcal{T}_1$, we are done.

If $\mathcal{T}_0 \neq \mathcal{T}_1$, then 
by an argument analogous to the one used to prove Proposition \ref*{degree formula}, the total sum of signs of Type 1 moves in
$\gamma$ is $\pm 2$. Hence, as in Section 3, we can construct a graph $\Gamma$ and find a good pair of vertices $(v,v')$ connected by a path $P$ in the graph.
We can cut homotopy $\gamma$ along $P$ to obtain a contraction
of $\gamma_0$ through curves of length less than $L$.  Using Theorem $1.1'$ from \cite{CL}, we can turn this into a contraction through simple closed curves
of length less than $L$.  We can do the same with $\gamma_1$.  This completes the proof of our theorem in the case when $M= \mathbb{R}^2$.

Now, suppose that $M = S^2$.  We begin with a lemma:

\begin{lemma}
	\label{lem:degree}
	If we consider the map $f: [0,1] \times S^2$ defined by 
	$\gamma(t,s)$, then there is a regular point $p \in S^2$ such that $f^{-1}(p)$ contains an even number of points.
\end{lemma}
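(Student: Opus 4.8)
The plan is to argue by contradiction via a degree computation, mirroring the reasoning used in Section 3 and in Lemma \ref*{lem:deg_1}. Suppose that for every regular value $p \in S^2$, the fiber $f^{-1}(p)$ has an odd number of points. Since $\gamma_0$ and $\gamma_1$ are contractible simple curves on $S^2$, we may cap off the homotopy $\gamma\colon [0,1]\times S^1 \to S^2$ by contracting $\gamma_0$ and $\gamma_1$ to points through small simple closed curves supported in small discs $D_0$ and $D_1$ (chosen disjoint after a small perturbation), just as in Section 4. This produces a map $\beta\colon S^2 \to S^2$. For a regular value $p$ lying outside $D_0\cup D_1$ and off all the finitely many singular curves of the homotopy, the preimages of $p$ under $\beta$ are exactly the preimages under $\gamma$ together with the preimages under the two capping families; but the capping families have image of arbitrarily small measure and so contribute nothing for a generic such $p$. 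Hence $|f^{-1}(p)|$ and $|\beta^{-1}(p)|$ have the same parity for a full-measure set of regular values, so under our assumption $\beta$ has odd degree and in particular $\deg\beta \neq 0$.

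Next I would extract the contradiction. The homotopy $\gamma$ is generic in the sense of Subsection \ref*{sec:generic}, so it decomposes into finitely many Reidemeister moves, and we may assign a sign in $\{+1,-1\}$ to each Type 1 move exactly as in Subsection 2.2 (using some fixed $x \in S^2$ avoided by all the relevant curves). The key point is that here the homotopy is \emph{not} periodic: it runs from $\gamma_0$ to $\gamma_1$, both of which are simple, hence immersed with turning number $\pm 1$ after stereographic projection. Running the Whitney turning-number bookkeeping from the proof of Proposition \ref*{degree formula} along the open interval $[0,1]$ rather than around $S^1$, the total change in turning number of $St_x(\gamma_t)$ equals the sum of the signs of the Type 1 moves, while the net contribution of the passages of $\gamma_t$ through $x$ is $-2\deg\beta$ (with appropriate sign conventions). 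Since both endpoint curves have turning number $\pm 1$, the difference of these turning numbers is even and lies in $\{-2,0,2\}$, which forces the sum of the signs of the Type 1 moves, and hence $2\deg\beta$, to be even in a way compatible only with $\deg\beta$ being even; combined with the parity of $\deg\beta$ established above, we get a contradiction.

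The main obstacle I anticipate is making the last parity/sign accounting fully rigorous: one must be careful that the endpoint curves $\gamma_0$ and $\gamma_1$ are simple and contractible on $S^2$, so that after stereographic projection away from a suitable point their turning numbers are well-defined and equal to $\pm 1$, and that the capping-off construction genuinely does not change the signed count of preimages of the chosen generic point $p$ (this is the same ``small measure'' argument as in Corollary \ref*{cor:deg_0} and Lemma \ref*{lem:deg_1}). Once the bookkeeping is set up correctly, the conclusion is immediate: the assumption that \emph{every} regular fiber is odd is incompatible with the Whitney-type constraint, so some regular value $p$ must have $|f^{-1}(p)|$ even, as claimed.
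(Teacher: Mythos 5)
There are two genuine gaps here, and together they sink the argument. First, the claim that the capping families ``have image of arbitrarily small measure'' is false in this setting. Unlike in Lemma \ref*{lem:deg_1}, where the curves being capped off were themselves small and bounded small discs, here $\gamma_0$ and $\gamma_1$ are arbitrary contractible simple closed curves on $S^2$; contracting each one to a point sweeps out an entire disc that it bounds, and these discs can have large measure. Consequently the signed preimage count $n(p)$ of a regular value $p$ under $f$ is \emph{not} equal to $\deg\beta$ for almost every $p$: it equals $\deg\beta$ only for $p$ outside the capped discs, and differs by $\pm 1$ for $p$ covered exactly once by a cap. This difference in parity between the two regions is precisely the content of the paper's proof (its sets $U$ and $V$), and your proposal erases it.

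Second, the Whitney bookkeeping does not produce the contradiction you want. Running Proposition \ref*{degree formula} along $[0,1]$ gives $T(St_x(\gamma_1)) - T(St_x(\gamma_0)) = S - 2n(x)$, where $S$ is the sum of the signs of the Type 1 moves and $n(x)$ is the signed preimage count at $x$. Since both endpoint turning numbers are $\pm 1$, the left side lies in $\{-2,0,2\}$, so all this says is that $S$ is even --- which is automatic and imposes no parity constraint whatsoever on $\deg\beta$ or on $n(x)$. Indeed $\deg\beta$ odd is perfectly possible (take $\gamma$ to be essentially a latitude sweepout of the round sphere between two small circles near opposite poles: $\deg\beta = \pm 1$, and generic points in the middle band have exactly one $f$-preimage). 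In that example the even fibers are the points near the poles, covered only by the caps --- exactly the points your ``small measure'' step discards. The correct argument is the paper's case split: if $\deg g$ is even, take $p$ in the uncapped region $U$; if $\deg g$ is odd, take $p$ in the region $V$ covered exactly once by the caps, where $|f^{-1}(p)| = |g^{-1}(p)| - 1$ is even.
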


\begin{proof}
 To see this we can find a smooth map
$g: [-1, 2] \times S^1$ such that

\begin{enumerate}
	\item	$f(t,s) = g(t,s)$ for $t \in [0,1]$.
	\item	$g(t,s)$ on $[-1,0]$ goes from a constant curve to $\gamma_0$.
	\item	$g(t,s)$ on $[1, 2]$ goes from $\gamma_1$ to a constant curve.
	\item	$S^2 \setminus (g([-1,0] \times S^1) \cup g([1,2] \times S^1))$ contains an open set $U \subset S^2$. 	
	\item	There exists an open set $V \subset S^2$  such that $V \subset  (g([-1,0] \times S^1) \cup g([1,2] \times S^1))$ and for every $x \in V$, 
		there is exactly one pair $(t,s) \in [-1,0] \times S^1 \cup [1,2] \times S^1$ that maps to $x$.
\end{enumerate}

$g$ is constructed as follows.  To construct $g$ on $[-1,0]$, we choose one of the discs bounded by $\gamma_0$, and contract $\gamma_0$ to a point in the disc in a monotone way.  We repeat the same procedure with $\gamma_1$.  Since $\gamma_0$ and $\gamma_1$ do not have the same image, all of the above
properties are satisfied.

Since $g$ maps $2 \times S^1$ to a point and $-1 \times S^1$ to a point, we can define a corresponding map from $S^2$ to $S^2$. 
%As such, we can compute the
%degree of the map $g$, which is independent of which regular value that we choose in $S^2$.  
If this degree is even, then for every regular value $p$ in $U$,
the total number of points in $f^{-1}(p)$ is even, as the sum of all of the signed preimages is equal to the degree of $g$, which is even.
If this degree is odd, then for every regular value in $p \in V$, the 
total number of preimages of $p$ ($f^{-1}(p)$) is even, since it is equal to the
degree of $g \pm 1$.  This completes the proof of our claim.
\end{proof}

We can now prove our theorem.  From the above lemma, choose a regular point $p \in S^2$ whose preimage has an even number of points.  Consider
the stereographic projection $St_p$ from $S^2 \setminus \{ p \}$ to $\mathbb{R}^2$.  We can additionally assume that $p$ is not in the image of 
any curve in $\gamma$ at which a Reidemeister move occurs (that is, any curve in $\gamma$ that has a singularity, non-traverse intersection
or an intersection which involved three arcs) and that $p$ is not in the image of
$\gamma_0$ or $\gamma_1$.  
%We can now proceed as we did with $M = \mathbb{R}^2$.  
Let $\widetilde{\gamma}$ denote the isotopy from Theorem $1.1'$ of \cite{CL}
that starts on $\gamma_0$ and ends on $\pm \gamma_1$.
As remarked in the proof of Lemma \ref*{lem:deg_1} we can assume that  
$\widetilde{\gamma}$ is a redrawing of the curve $\gamma_t$ for each
$t$. Let $\mathcal{S}_t$ denote the difference between the turning
number of $St_p(\gamma_t)$ and the turning number of 
$St_p(\widetilde{\gamma_t})$.  Note that $\mathcal{S}_t$ is undefined
for finitely many times $t$ when the curve $\gamma_t$
(and, as a result, $\widetilde{\gamma_t}$) intersects $p$.
Since both homotopies start on the same curve we have $\mathcal{S}_0=0$.
When $\gamma$ goes through a positive Type 1 move
its turning number increases by $1$, while the turning number
of the corresponding curve in $\widetilde{\gamma}$ remains the same.
Hence $\mathcal{S}_t$ increases by one. Similarly, it decreases by
one whenever $\gamma$ goes through a negative Type 1 move.
If $\gamma$ passes through the point $p$
then, as in the proof of Proposition \ref*{degree formula}
(see Figure \ref*{fig:degree}), we have that the turning number
of $\gamma$ changes by $\pm 2$, as does the turning number
of $\widetilde{\gamma}$.
Hence, every time the curve goes through the point $p$, we have that 
$\mathcal{S}_t$ changes by $0$ or $\pm 4$.

In the end we have two possibilities. 
If $\mathcal{S}_1 = 0$, 
then $\widetilde{\gamma}$ defines the desired homotopy and we are done.
If $\mathcal{S}_1 = \pm 2$, 
then it follows that the total sum of signed Type 1 moves in homotopy $\gamma$ 
is $\pm 2$.
Hence, there is a path between a good pair of vertices $(v, v')$ in the appropriate graph $\Gamma$, and so
using the methods from Section 4, we can contract $\gamma_0$ to a point through simple  curves of length less than $L$.  
Similarly, we can do this for $\gamma_1$.  This completes the proof.

\end{document}